\theoremstyle{plain}
\newtheorem{theo}{Theorem}[section]
\newtheorem{lemma}[theo]{Lemma}
\newtheorem{propo}[theo]{Proposition}
\newtheorem{coro}[theo]{Corollary}
\theoremstyle{definition}
\newtheorem{defi}[theo]{Definition}
\theoremstyle{remark}
\newtheorem{rem}[theo]{Remark}
\newcommand\Sets{\mathcal{S}\mathrm{ets}}
\newcommand\Hom{\operatorname{Hom}}
\newcommand\Cell{\operatorname{Cell}}
\newcommand\cC{\mathcal {C}}
\newcommand\cE{\mathcal{E}}
\newcommand\cK{\mathcal {K}}
\newcommand\cV{\mathcal {V}}
\numberwithin{equation}{section}
\begin{document}
\title[On solid and rigid monoids in monoidal categories]{On solid and rigid
monoids\\ in monoidal categories}
\author[J.\ J.\ Guti\'errez]{Javier J.\ Guti\'errez}
\thanks{The author was supported by the NWO (SPI 61-638), the MEC-FEDER grant MTM2010-15831,
and by the Generalitat de Catalunya as a member of the team 2009~SGR~119.}
\address{Radboud Universiteit Nijmegen, Institute for Mathematics, Astrophysics, and
Particle Physics, Heyendaalseweg 135, 6525 AJ Nijmegen, The Netherlands}
\email{j.gutierrez@math.ru.nl}

\keywords{Solid ring, rigid ring, localization, colocalization}
\subjclass[2010]{Primary: 18D10, 55P60; Secondary: 55P42}

\begin{abstract}
We introduce the notion of solid monoid and rigid monoid in monoidal
categories and study the formal properties of these objects in this
framework. We show that there is a one to one correspondence between solid
monoids, smashing localizations and mapping colocalizations, and prove that
rigid monoids appear as localizations of the unit of the monoidal structure.
As an application, we study solid and rigid ring spectra in the stable
homotopy category and characterize connective solid ring spectra as Moore
spectra of subrings of the rationals.
\end{abstract}

\maketitle
\section{Introduction}
A \emph{solid ring} in the sense of Bousfield--Kan~\cite{BK72} is a ring $R$
with unit whose core $cR$ is $R$ itself, where the \emph{core} 	of $R$ is
defined as
$$
cR=\{ x\in R \mid 1_R\otimes_{\mathbb{Z}} x=x\otimes_{\mathbb{Z}}1_R\}.
$$
They were also called $T$-rings in~\cite{BS77} and $\mathbb{Z}$-epimorphs
in~\cite{DS84}. Indeed, the unit $\mathbb{Z}\to R$ is an epimorphism of rings
if and only if $R$ is solid.

For example $\mathbb{Z}$, $\mathbb{Q}$ and $\mathbb{Z}/n$ are solid, but the
$p$-adic integers $\widehat{\mathbb{Z}}_p$ and $\mathbb{R}$ are not. Solid
rings are completely classified and all of them are commutative and
countable~\cite[Proposition 3.1]{BK72}. From this classification it turns
out, for example, that the only torsion-free solid rings are the subrings of
the rationals.

A \emph{rigid ring} $R$ is a ring with unit such that the evaluation at the
unit morphism
$$
\Hom_{\mathbb{Z}}(R,R)\longrightarrow R
$$
that sends $\varphi$ to $\varphi(1_R)$ is an isomorphism. The terminology of
rigid rings was first used in~\cite{CRT} in order to describe the class of
rings appearing as localizations of the circle $S^1$ in the category of
topological spaces, and as localizations of the integers in the category of
groups (see {\cite[Theorem 5.9]{CRT}}). This class of rings had been
previously studied under the name of $E$-rings~\cite{Sch73}. All of them are
commutative and they have been classified in the torsion-free finite rank
case~\cite{PV91}.

Examples of rigid rings are $\mathbb{Z}/n$, subrings of $\mathbb{Q}$, and
$\widehat{\mathbb{Z}}_p$ for any prime $p$. There are many other examples
such as all solid rings, and the products $\prod_{p\in P}\mathbb{Z}/p$ and
$\prod_{p\in P}\widehat{\mathbb{Z}}_p$, where $P$ is any set of primes.
However, there are groups such as the Pr\"ufer group $\mathbb{Z}/p^{\infty}$
or the $p$-adic field $\widehat{\mathbb{Q}}_p$ that do not admit a rigid ring
structure.

There exist rigid rings of arbitrarily large cardinality~\cite{DMV87}, and
this fact was used in~\cite{CRT}  and ~\cite{CG05} to prove that there is a
proper class of non-equivalent $f$-localizations in the category of
topological spaces and in the category of spectra, respectively.

In this paper, we define the concept of solid monoid and rigid monoid in
monoidal categories, similarly to their algebraic counterparts, and describe
the formal properties they satisfy in this framework (see
Section~\ref{sol_rig_mon}). Thus, if $(\cE,\otimes, I, \Hom_{\cE})$ is a
closed symmetric monoidal category, then a monoid $(R,\mu,\eta)$ is a
\emph{solid monoid} if $\mu$ is an isomorphism, and $R$ is a \emph{rigid
monoid} if the induced morphism
$$
\eta^*\colon \Hom_{\cE}(R,R)\longrightarrow \Hom_{\cE}(I, R)
$$
is an isomorphism in $\cE$.

Solid and rigid monoids are closely related with localization and
colocalization functors in $\cE$, and in Section~\ref{loc_coloc_mon} we
recall the basics of this theory in the setting of enriched monoidal
categories. If $R$ is a solid monoid in $\cE$, then the functors $X\mapsto
X\otimes R$ and $X\mapsto \Hom_{\cE}(R,X)$ are idempotent. We will show that
this property characterizes solid monoids. In fact, we will prove that there is
a bijection between the following classes:
\begin{itemize}
\item[{\rm (i)}] Solid monoids.
\item[{\rm (ii)}] Smashing localizations.
\item[{\rm (iii)}] Mapping colocalizations.
\end{itemize}
Here, a \emph{smashing localization functor} means a localization functor of
the form $L_AX =X\otimes A$ for a fixed $A$, and a \emph{mapping
colocalization functor} is one of the form $C_A X=\Hom_{\cE}(A, X)$ for a
fixed $A$. Moreover, we show that if $R$ is a solid monoid, then the
following categories are equivalent: the category of $R$\nobreakdash-modules,
the category of $L_R$-local objects, the category of $C_R$-colocal objects
and the category of $\eta$-local objects, where $\eta\colon I\to R$ is the
unit of $R$.

For an arbitrary localization functor $L$, we prove that the object $LI$ has
a rigid monoid structure, where $I$ denotes the unit of the monoidal structure,
and that all rigid monoids appear this way.

Finally, in Section~\ref{sol_rig_spec} we particularize our results to the
stable homotopy category and prove that if $(R,\mu,\eta)$ is a solid ring
spectrum, then the $\eta$-localization functor $L_\eta$ always commutes with
the suspension, and that the only connective solid ring spectra are Moore
spectra $MA$, with $A$ a subring of the rationals.

\bigskip
\noindent\textbf{Acknowledgements}. I would like to thank Oriol Ravent\'os
for many helpful discussions on the topic of this paper.

\section{Localizations and colocalizations in enriched categories}
\label{loc_coloc_mon}

Throughout the paper $\cE$ will denote a cocomplete closed symmetric monoidal
category, with tensor product $\otimes$, unit $I$ and internal hom
$\Hom_{\cE}(-,-)$. A~functor $F\colon \cE\rightarrow \cE'$ between symmetric
monoidal categories is called \emph{monoidal} if it is equipped with a
binatural transformation $F(-)\otimes_{\cE'} F(-)\rightarrow F(-\otimes_{\cE}
-)$ and a unit $I_{\cE'}\rightarrow F(I_{\cE})$ satisfying the usual
associativity, symmetry and unit conditions. A~symmetric monoidal functor is
called  \emph{strong} if the structure maps are isomorphisms.

Let $\cV$ be a symmetric monoidal category. Recall that a \emph{closed
symmetric mon\-oidal $\cV$-category} is a closed symmetric monoidal category
$\cE$ together with an adjunction $i\colon\cV\leftrightarrows\cE\colon r$,
where the left adjoint $i$ is strong monoidal. (Note that this automatically implies
 that $r$ is monoidal.)

Any closed symmetric monoidal $\cV$-category $\cE$ is enriched, tensored and
cotensored over $\cV$. Indeed, if $A$ is any object of $\cV$ and $X$, $Y$ are
objects of $\cE$, then we define $\Hom(X,Y)=r(\Hom_{\cE}(X, Y))$, $A\otimes
X=i(A)\otimes_{\cE} X$ and $X^A=\Hom_{\cE}(i(A), X)$. Thus, we have natural
isomorphisms
$$
\cE(A\otimes X, Y)\cong \cV(A, \Hom(X,Y))\cong\cE(X, Y^A).
$$
Conversely, any closed symmetric monoidal category $\cE$ that is enriched,
tensored and cotensored over $\cV$ is a closed symmetric monoidal
$\cV$-category, where
$$
i(A)=A\otimes I_{\cE}\;\mbox{ and }\; r=\Hom(I_{\cE}, X).
$$

Let $\cC$ be any category. A functor $L\colon\cC\rightarrow\cC$ is called
\emph{coaugmented} if it is equipped with a natural transformation $l\colon
{\rm Id}\rightarrow L$. A coaugmented functor is \emph{idempotent} if
$l_{LX}=Ll_X$ and $l_{LX}$ is an isomorphism for every $X$ in $\cC$.

Dually, a functor $C\colon\cC\rightarrow\cC$ is called \emph{augmented} if it
is equipped with a natural transformation $c\colon C\rightarrow {\rm Id}$. An
augmented functor is \emph{idempotent} if $c_{CX}=Cc_X$ and $c_{CX}$ is an
isomorphism for every $X$ in $\cC$.

\begin{defi}
A coaugmented idempotent functor~$(L,l)$ in $\cC$ is called a
\emph{localization functor}. Dually, an augmented idempotent functor~$(C,c)$
in $\cC$ is called a \emph{colocalization functor}. \label{def_loc}
\end{defi}
The closure under isomorphisms of the objects in the image of~$L$ are called
\emph{$L$\nobreakdash-local objects} and the closure under isomorphisms of the objects in
the image of~$C$ are called \emph{$C$-colocal objects}. A map $f$ is called
an \emph{$L$-local equivalence} if $L(f)$ is an isomorphism, and it is called
a \emph{$C$-colocal equivalence} if $C(f)$ is an isomorphism.

Therefore, for every $X$ in $\cC$, the morphism $l_X\colon X\to LX$ is an
$L$-local equivalence, and it has the following universal property: $l_X$ is
initial among all morphisms from $X$ to objects isomorphic to $LY$ for some
$Y$, that is, the induced map
\begin{equation}
l_X^*\colon\cC(LX, LY)\longrightarrow \cC(X, LY)
\label{univ_prop}
\end{equation}
is a bijection for every $X$ and $Y$. In fact, $L$-local objects and
$L$-local equivalences are \emph{orthogonal}, that is, if $Z$ is $L$-local
and $f\colon X\to Y$ is an $L$-local equivalence, then the induced map
\begin{equation}
f^*\colon \cC(Y,Z)\longrightarrow \cC(X,Z)
\label{orthog}
\end{equation}
is a bijection. Moreover, an object is $L$-local if and only if it is
orthogonal to all $L$-local equivalences, and a morphism is an $L$-local
equivalence if and only if it is orthogonal to all $L$-local objects.

Dually, in the case of colocalization functors, the morphism $c_X$ is
terminal among all morphism from $C$-colocal objects to $X$, and an
orthogonality relation similar to~\eqref{orthog} between $C$-colocal objects
and $C$-colocal equivalences holds.

Thus, if we want to define localization and colocalization functors in closed
monoidal categories or $\cV$-enriched categories, it makes sense to replace
the set $\cC(-,-)$ in $\eqref{univ_prop}$ and $\eqref{orthog}$, and in the
corresponding formulas for colocalizations, by the internal hom or the
$\cV$-enrichment.

\begin{defi}
Let $\cE$ be a closed symmetric monoidal $\cV$-category with associated
adjunction $i\colon \cV\leftrightarrows \cE\colon r$ and let
$\Hom(-,-)=r\Hom_{\cE}(-,-)$.
\begin{itemize}
\item[{\rm (i)}] An \emph{$(i,r)$-localization functor} is a localization
    functor $(L, l)$ in $\cE$ such that for every $L$-local equivalence
    $f\colon X\to Y$ and every $L$-local object $Z$ in $\cE$, the induced
    map
\begin{equation}
f^*\colon \Hom(Y,Z)\longrightarrow \Hom(X, Z)
\label{enriched_orthog}
\end{equation}
is an isomorphism in $\cV$.
\item[{\rm (ii)}] An \emph{$(i,r)$-colocalization functor} is a
    colocalization functor $(C, c)$ in $\cE$ such that for every
    $C$-colocal equivalence $f\colon X\to Y$ and every $C$-colocal object
    $Z$ in $\cE$, the induced map
\begin{equation}
f_*\colon \Hom(Z,X)\longrightarrow \Hom(Z, Y)
\label{co_enriched_orthog}
\end{equation}
is an isomorphism in $\cV$.
\end{itemize}
\label{def_loc_coloc}
\end{defi}
Note that in order to check if an object is $L$-local, it is enough to see
that condition $\eqref{enriched_orthog}$ holds for every $L$-equivalence of
the form $l_X\colon X\to LX$. In the same way, an object $Z$ is $C$-colocal
if and only if $\eqref{co_enriched_orthog}$ holds for every $C$-colocal
equivalence of the form $c_X\colon CX\to X$.

Let $\cE$ be a closed symmetric monoidal category and consider the strict
monoidal functor $i\colon \Sets\to \cE$ defined by $i(A)=\coprod_{a\in A} I$,
where $\Sets$ is closed monoidal with the cartesian product. This functor has
a right adjoint $r$, namely $r(X)=\cE(I, X)$. In this case, the enriched
orthogonality condition~\eqref{enriched_orthog} for an $(i,r)$-localization
reduces to condition~\eqref{orthog}.

If we view $\cE$ itself as a closed symmetric monoidal $\cE$-category, just
by taking $i$ and $r$ to be the identity functors, then an $({\rm Id}, {\rm
Id})$-localization functor in $\cE$ is a~\emph{closed localization functor}
following the terminology used in~\cite[Definition 3.3]{CGMV}. In the same
way, we define \emph{closed colocalizations} in $\cE$ as $({\rm Id}, {\rm
Id})$-colocalizations in~$\cE$.

Observe that any $(i,r)$-localization functor in $\cE$ satisfies orthogonality
condition~\eqref{orthog} for any $i$ and $r$. Indeed, by adjointness
$$
\cV(I_{\cV}, \Hom(X,Y))=\cV(I_{\cV},r\Hom_{\cE}(X,Y))\cong \cE(I_{\cE},
\Hom_{\cE}(X,Y))\cong \cE(X,Y),
$$
and thus, by applying the functor $\cV(I,-)$ to~$\eqref{enriched_orthog}$, we
get a bijection
$$
\cE(Y,Z)\longrightarrow \cE(X,Z).
$$
Similarly, one gets that any $(i,r)$-colocalization functor satisfies the
corresponding orthogonality condition at the level of sets of morphisms, for
any $i$ and $r$.

Observe that the class of $L$-local objects and the class of $C$-colocal
objects are closed under retracts. The following lemma gathers some closure
properties of local and colocal objects and equivalences with respect to the
tensor product and the internal hom.
\begin{lemma}
Let $(L,l)$ be an $(i,r)$-localization functor and $(C,c)$ be an
$(i,r)$\nobreakdash-colo\-ca\-li\-zation functor in a closed symmetric mo\-noi\-dal
$\cV$-category $\cE$. Let $A$, $B$ in $\cV$ and $X$, $Y$ in $\cE$.
\begin{itemize}
\item[{\rm (i)}] If $h$ is an $L$-local equivalence, then so is the
    tensor product $i(A)\otimes h$. If $h$ is a $C$-colocal equivalence,
    then so is $\Hom_{\cE}(i(A),h)$.
\item[{\rm (ii)}] If $f\colon i(A)\to Y$ and $g\colon X\to i(B)$ are
    $L$-local equivalences, then so is the tensor product $f\otimes g$.
\item[{\rm (iii)}] If $X$ is $L$-local, then so is $\Hom_{\cE}(i(A), X)$.
    If $X$ is $C$-colocal, then so is the tensor product $i(A)\otimes X$.
\end{itemize}
\label{closure}
\end{lemma}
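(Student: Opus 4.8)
The plan is to prove part~(i) first, then deduce part~(ii) from it by factoring the tensor product into one--variable maps, and finally obtain part~(iii) from part~(i) together with the orthogonality characterizations of local and colocal objects recalled above. The one genuinely new step is in~(i); the rest is bookkeeping with adjunctions and the symmetry isomorphisms, together with the dual statements for colocalizations.

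For the first assertion of~(i), let $h\colon X\to Y$ be an $L$-local equivalence and $Z$ an $L$-local object. The enriched orthogonality condition of Definition~\ref{def_loc_coloc}(i) says that $h^*\colon\Hom(Y,Z)\to\Hom(X,Z)$ is an isomorphism in $\cV$, hence $\cV(A,h^*)$ is a bijection. Now combine the adjunction $i\dashv r$ with the tensor--hom adjunction of $\cE$ to get, for every $V$ in $\cE$, a natural chain
\begin{equation*}
\cV(A,\Hom(V,Z))=\cV(A,r\Hom_{\cE}(V,Z))\cong\cE(i(A),\Hom_{\cE}(V,Z))\cong\cE(i(A)\otimes V,Z),
\end{equation*}
under which $\cV(A,h^*)$ is carried (by naturality in $V$) to $(i(A)\otimes h)^*\colon\cE(i(A)\otimes Y,Z)\to\cE(i(A)\otimes X,Z)$. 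Thus $(i(A)\otimes h)^*$ is a bijection for every $L$-local $Z$, and since a morphism is an $L$-local equivalence if and only if it is orthogonal to all $L$-local objects, $i(A)\otimes h$ is an $L$-local equivalence. The statement about $\Hom_{\cE}(i(A),h)$ for a $C$-colocal equivalence $h$ is dual: from the fact that $h_*\colon\Hom(Z,X)\to\Hom(Z,Y)$ is a $\cV$-isomorphism for every $C$-colocal $Z$, applying $\cV(A,-)$ together with the natural isomorphisms $\cV(A,\Hom(Z,X))\cong\cE(i(A)\otimes Z,X)\cong\cE(Z,\Hom_{\cE}(i(A),X))$ shows that $\Hom_{\cE}(i(A),h)$ induces bijections out of every $C$-colocal object, hence is a $C$-colocal equivalence.

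For~(ii), I would factor $f\otimes g\colon i(A)\otimes X\to Y\otimes i(B)$ as
\begin{equation*}
i(A)\otimes X\xrightarrow{\id_{i(A)}\otimes g}i(A)\otimes i(B)\xrightarrow{f\otimes\id_{i(B)}}Y\otimes i(B),
\end{equation*}
note that $\id_{i(A)}\otimes g$ is an $L$-local equivalence by~(i) since $g$ is, that $f\otimes\id_{i(B)}$ is one as well (by~(i), after transporting along the symmetry isomorphisms of $\cE$ to rewrite it as $\id_{i(B)}\otimes f$ up to isomorphism), and conclude because $L$-local equivalences are closed under composition. Part~(iii) follows the same pattern: if $X$ is $L$-local, then for every $L$-local equivalence $f$ the natural isomorphism $\Hom(V,\Hom_{\cE}(i(A),X))\cong\Hom(i(A)\otimes V,X)$ in $\cV$ identifies $f^*$ with $(i(A)\otimes f)^*$, which is a $\cV$-isomorphism by Definition~\ref{def_loc_coloc}(i) since $i(A)\otimes f$ is an $L$-local equivalence by~(i); hence $\Hom_{\cE}(i(A),X)$ is $L$-local. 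Dually, if $X$ is $C$-colocal, then $i(A)\otimes X$ is $C$-colocal, using the isomorphism $\Hom(i(A)\otimes V,X)\cong\Hom(V,\Hom_{\cE}(i(A),X))$ and the fact, from~(i), that $\Hom_{\cE}(i(A),f)$ is a $C$-colocal equivalence whenever $f$ is.

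The step I expect to be the crux is the one inside~(i): recognizing that hitting the $\cV$-enriched orthogonality isomorphism with $\cV(A,-)$ and folding in the adjunction $i\dashv r$ and the internal hom of $\cE$ converts ``$h^*$ is a $\cV$-isomorphism'' into ``$i(A)\otimes h$ is orthogonal, at the level of hom-sets, to every $L$-local object''. Everything else is formal: (ii) is a composition argument with the symmetry isomorphisms, and (iii) is obtained by dualizing and by reading the adjunction isomorphisms in the other direction, so I do not anticipate any further obstacle.
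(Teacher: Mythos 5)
Your argument is correct and follows essentially the same route as the paper: you transpose across the adjunctions $i\dashv r$ and tensor--hom to convert orthogonality for $h$ into orthogonality for $i(A)\otimes h$ (the paper tests against all objects $W$ of $\cV$ via the enriched Yoneda lemma and the strong monoidality of $i$, while you test against $A$ alone and finish with the set-level characterization of $L$-local equivalences as the maps orthogonal to all $L$-local objects), and then you deduce (ii) from the same factorization $f\otimes g=(f\otimes \id)\circ(\id\otimes g)$ and (iii) by reading the same adjunctions in the other direction, exactly as in the paper. The only blemish is notational: in the last step of (iii) the isomorphism you need is $\Hom(i(A)\otimes X,V)\cong\Hom(X,\Hom_{\cE}(i(A),V))$, with the colocal object $X$ in the source slot, rather than the displayed $\Hom(i(A)\otimes V,X)\cong\Hom(V,\Hom_{\cE}(i(A),X))$, though your intended argument is clear from the surrounding text.
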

\begin{proof}
We only give proofs for the statements for the case of localizations. The
dual case is proved by using similar arguments. By the enriched Yoneda lemma,
to prove (i) it is enough to check that
$$
\cV(W, r(\Hom_{\cE}(i(A)\otimes X, Z)))\longrightarrow
\cV(W, r(\Hom_{\cE}(i(A)\otimes Y, Z)))
$$
is an isomorphism for all $L$-local objects $Z$ in $\cE$ and all $W$ in
$\cV$. By adjointness and the fact that $i$ is strong monoidal we have that
\begin{multline}\notag
\cV(W, r(\Hom_{\cE}(i(A)\otimes X, Z)))\cong \cE(i(W)\otimes i(A)\otimes X, Z)\\
\cong\cE(i(W\otimes A), \Hom_{\cE}(X, Z))\cong \cV(W\otimes A, r(\Hom_{\cE}(X, Z)))\\
\cong \cV(W\otimes A, r(\Hom_{\cE}(Y, Z)))\cong \cV(W,
r(\Hom_{\cE}(i(A)\otimes Y, Z))).
\end{multline}
Part (ii) follows from (i) since $f\otimes g=(f\otimes i(B))\circ
(i(A)\otimes g)$ and the composition of $L$\nobreakdash-equivalences is an
$L$-equivalence.

To prove (iii), let $h\colon W\to Z$ be any $L$\nobreakdash-equivalence. Then
\begin{multline}\notag
\!\!\Hom(Z, \Hom_{\cE}(i(A), X))\cong r(\Hom_{\cE}(Z, \Hom_{\cE}(i(A), X)))
\cong r(\Hom_{\cE}(Z\otimes i(A), X))\\
\cong r(\Hom_{\cE}(W\otimes i(A), X)\cong \Hom(W, \Hom_{\cE}(i(A), X)),
\end{multline}
where the third isomorphism follows since $h\otimes i(A)$ is an
$L$-equivalence by (i).
\end{proof}

\begin{coro}
If $(L, l)$ is a closed localization in $\cE$, then the tensor product of an
$L$-local equivalence with any object is an $L$-local equivalence, and  if
$Z$ is $L$-local, then $\Hom_{\cE}(W,Z)$ is $L$\nobreakdash-local for any
$W$. Dually, if $(C,c)$ is a closed colocalization on~$\cE$, then the tensor
product of a $C$-colocal object with any object is again
$C$\nobreakdash-colocal, and if $h$ is a $C$-colocal equivalence, then so is
$\Hom_{\cE}(W, h)$ for any $W$. \qed
\label{coro2.5}
\end{coro}
\begin{defi}
Let $\cE$ be a closed symmetric monoidal category. A localization functor
$(L,l)$ in $\cE$ is called \emph{smashing}, if there is an object $A$ such
that $LX= X\otimes A$ for every $X$. A colocalization functor $(C,c)$ is
called \emph{mapping} if there is an object $A$ in $\cE$ such that
$CX=\Hom_{\cE}(A,X)$ for every~$X$.
\end{defi}
Note that if $(L,l)$ is smashing, then $(L,l)$ is canonically isomorphic  to
$(L, {\rm Id}\otimes l_I)$, that is, there is a canonical natural isomorphism
$\phi\colon L\to L$ such that $\phi\circ l={\rm Id}\otimes l_I$. Moreover,
$A\cong LI$ and $X\otimes LY\cong L(X\otimes Y)$ for all $X$ and $Y$ in
$\cE$. Similarly, if $C$ is mapping, then
$C(\Hom_{\cE}(X,Y))\cong\Hom_{\cE}(X, CY)$.

\begin{propo}
Let $(L,l)$ be a smashing localization functor and let $(C,c)$ be a mapping
colocalization functor in $\cE$. Then, for every closed symmetric monoidal
$\cV$-structure $(i,r)$ on $\cE$, we have the following:
\begin{itemize}
\item[{\rm (i)}] The localization functor $(L, l)$ is an
    $(i,r)$-localization.
\item[{\rm (ii)}] The colocalization functor $(C, c)$ is an
    $(i,r)$-colocalization.
\end{itemize}
\label{smashing}
\end{propo}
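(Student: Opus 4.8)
The plan is to reduce the enriched orthogonality conditions \eqref{enriched_orthog} and \eqref{co_enriched_orthog} to the set-level orthogonality \eqref{orthog} (and its colocalization analogue), which already hold for arbitrary localization and colocalization functors. The only additional input needed is how the smashing and mapping structures interact with tensoring and cotensoring: if $L=-\otimes A$ is smashing then $L(W\otimes f)\cong W\otimes Lf$ for every object $W$ of $\cE$ and every morphism $f$, so $W\otimes(-)$ carries $L$-local equivalences to $L$-local equivalences; dually, if $C=\Hom_{\cE}(A,-)$ is mapping then $C(\Hom_{\cE}(W,f))=\Hom_{\cE}(A,\Hom_{\cE}(W,f))\cong\Hom_{\cE}(W,\Hom_{\cE}(A,f))=\Hom_{\cE}(W,Cf)$, so $\Hom_{\cE}(W,-)$ carries $C$-colocal equivalences to $C$-colocal equivalences. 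The last observation is that the morphism appearing in \eqref{enriched_orthog} is the image under the functor $r$ of a morphism of $\cE$, so it suffices to show the latter is an isomorphism in $\cE$; the same applies to \eqref{co_enriched_orthog}. In particular the conclusion will not depend on the choice of $(i,r)$ beyond the fact that $r$ preserves isomorphisms.

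For (i), I would fix an $L$-local equivalence $f\colon X\to Y$ and an $L$-local object $Z$, and first show that $\Hom_{\cE}(f,Z)\colon\Hom_{\cE}(Y,Z)\to\Hom_{\cE}(X,Z)$ is an isomorphism in $\cE$. By the Yoneda lemma it is enough to check that $\cE(W,\Hom_{\cE}(f,Z))$ is a bijection for every object $W$ of $\cE$; by the tensor--hom adjunction in $\cE$ this map is identified with precomposition by $W\otimes f$, that is, with $\cE(W\otimes Y,Z)\to\cE(W\otimes X,Z)$. Since $W\otimes f$ is an $L$-local equivalence and $Z$ is $L$-local, \eqref{orthog} gives that this is a bijection. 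Applying $r$ to the resulting isomorphism $\Hom_{\cE}(f,Z)$ then shows that $f^*\colon\Hom(Y,Z)\to\Hom(X,Z)$ is an isomorphism in $\cV$, which is precisely \eqref{enriched_orthog}.

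Part (ii) is the formal dual. Given a $C$-colocal equivalence $f\colon X\to Y$ and a $C$-colocal object $Z$, I would show that $\Hom_{\cE}(Z,f)\colon\Hom_{\cE}(Z,X)\to\Hom_{\cE}(Z,Y)$ is an isomorphism in $\cE$: by Yoneda it suffices that $\cE(W,\Hom_{\cE}(Z,f))$ be a bijection for all $W$, and using the closed structure together with the symmetry of $\otimes$ this map is identified with postcomposition by $\Hom_{\cE}(W,f)$, that is, with $\cE(Z,\Hom_{\cE}(W,X))\to\cE(Z,\Hom_{\cE}(W,Y))$. Since $\Hom_{\cE}(W,f)$ is a $C$-colocal equivalence and $Z$ is $C$-colocal, the colocalization analogue of \eqref{orthog} makes this a bijection, and applying $r$ yields \eqref{co_enriched_orthog}.

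There is no substantive obstacle here; the only care required is bookkeeping of variance---using $W\otimes(-)$ on the source side for localizations and $\Hom_{\cE}(W,-)$ on the target side for colocalizations, and transporting the relevant morphism across the tensor--hom adjunction so that \eqref{orthog} (respectively its dual) applies to an equivalence built from $f$. One could instead argue through the enriched Yoneda lemma over $\cV$, as in the proof of Lemma \ref{closure}, but the reduction to an isomorphism in $\cE$ followed by an application of $r$ is shorter and makes transparent why the statement is independent of $(i,r)$.
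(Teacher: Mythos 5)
Your proof is correct. The two ingredients you use are legitimate and available at this point of the paper: the set-level orthogonality between $L$-local objects and $L$-local equivalences (and its colocalization dual), which holds for arbitrary (co)localization functors, and the fact that a smashing $L$ commutes with $W\otimes(-)$ while a mapping $C$ commutes with $\Hom_{\cE}(W,-)$ (both recorded in the remark preceding the proposition), so there is no circularity with Lemma~\ref{closure} or Corollary~\ref{coro2.5}. Your route differs from the paper's in its organization: you first prove the stronger, enrichment-free statement that $\Hom_{\cE}(f,Z)$ (resp.\ $\Hom_{\cE}(Z,f)$) is an isomorphism in $\cE$ --- i.e.\ that a smashing localization is a closed $(\mathrm{Id},\mathrm{Id})$-localization and a mapping colocalization a closed colocalization --- by testing against all objects $W$ of $\cE$ via the ordinary Yoneda lemma, and then obtain every $(i,r)$-case at once by applying $r$, which makes the independence of the choice of $(i,r)$ transparent; you also treat an arbitrary $L$-local equivalence $f$ and local object $Z$ directly. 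The paper instead verifies the $\cV$-enriched condition head-on, via the enriched Yoneda lemma (testing only against objects $i(W)$ with $W$ in $\cV$), rewriting $\cV(W,\Hom(LX,LY))\cong\cE(L(i(W)\otimes X),LY)\cong\cE(i(W)\otimes X,LY)$ using the smashing identification $LX=X\otimes A$ and the unenriched universal property, and it checks the condition only on the coaugmentation maps $l_X$ against local objects $LY$, leaving the standard extension to general $L$-equivalences implicit. The underlying computation is the same in both arguments; what your version buys is the explicit intermediate statement in $\cE$ and uniformity in $(i,r)$, at the cost of invoking naturality of the smashing/mapping identifications for morphisms, which the paper's remark indeed provides.
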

\begin{proof}
To prove (i) we need to check that the induced map
$$
l^*_X\colon\Hom(LX, LY)\longrightarrow \Hom(X, LY)
$$
is an isomorphism in $\cV$ for all $X$ and $Y$. By the enriched Yoneda lemma
it is enough to prove that
$$
\cV(W, \Hom(LX, LY))\longrightarrow \cV(W, \Hom(X, LY))
$$
is an isomorphism for all $W$ in $\cV$. This follows directly, since by
adjointness
\begin{multline}\notag
\cV(W, \Hom(LX, LY))\cong \cE(i(W), \Hom_{\cE}(X\otimes A, LY))\\
\cong \cE(i(W)\otimes X\otimes A, LY)) \cong \cE(L(i(W)\otimes X), LY))
\end{multline}
and using the fact that $\cE(L(i(W)\otimes X), LY)\cong \cE(i(W)\otimes X,
LY)$. Part (ii) is proved by a similar argument.
\end{proof}

We recall now an important source of examples of localization functors and
colocalization functors, namely localization with respect to morphisms and
colocalization with respect to objects.

\subsection{Localization with respect to morphisms}
\label{section_f_loc} Let $\cE$ be a closed symmetric monoidal $\cV$-category
with adjunction $(i,r)$ and let $\mathcal{L}$ be a class of morphisms in
$\cE$. Recall that $\Hom(-,-)=r\Hom_{\cE}(-,-)$.
\begin{itemize}
\item[{\rm (i)}] An object $Z$ in $\cE$ is \emph{$\mathcal{L}$-local} if
    for every $f\colon X\to Y$ in $\mathcal{L}$ the induced map
$$
f^*\colon \Hom(Y,Z)\longrightarrow \Hom(X,Z)
$$
is an isomorphism in $\cV$.
\item[{\rm (ii)}] A morphism $g\colon U\to W$ is called an
    \emph{$\mathcal{L}$-local equivalence} if the induced map
$$
g^*\colon \Hom(W,Z)\longrightarrow \Hom(U,Z)
$$
is an isomorphism in $\cV$ for every $\mathcal{L}$-local object $Z$.
\end{itemize}
\begin{defi}
An \emph{$(i,r)$-$\mathcal{L}$-localization functor} in $\cE$ is an
$(i,r)$-localization functor $(L,l)$ such that the class of $L$-local objects
coincides with the class of $\mathcal{L}$-local objects and the class of
$L$-local equivalences coincides with the class of $\mathcal{L}$-local
equivalences. We denote this localization functor by $L_{\mathcal{L}}$.
\end{defi}

Now, consider $\cE$ as an $\cE$-category and let $E$ be any object if $\cE$.
\begin{itemize}
\item[{\rm (i)}] A morphism $f$ in $\cE$ is called an
    \emph{$E$-equivalence} if $E\otimes f$ is an isomorphism
\item[{\rm (ii)}] An object $Z$ in $\cE$ is called \emph{$E$-local} if
    for every $E$-equivalence $f\colon X\to Y$, the induced map
$$
f^*\colon \Hom_{\cE}(Y, Z)\longrightarrow \Hom_{\cE}(X,Z)
$$
is an isomorphism in $\cE$.
\end{itemize}
An \emph{$E$-localization functor} is an $\mathcal{L}$-localization functor,
where the class of $\mathcal{L}$-local equivalences equals the class of
$E$-equivalences. We will denote this localization functor by $L_E$. Every
smashing localization $L$ is of this type, namely $L=L_{LI}$.

\subsection{Colocalization with respect to objects}
Now, let $\cK$ be a class of objects of $\cE$.
\label{section_coloc}
\begin{itemize}
\item[{\rm (i)}] A morphism $g\colon U\to W$ is called a
    \emph{$\mathcal{K}$-colocal equivalence} if the induced map
$$
g_*\colon \Hom(Z,U)\longrightarrow \Hom(Z,W)
$$
is an isomorphism in $\cV$ for every $Z$ in $\cK$.
\item[{\rm (ii)}] An object $Z$ in $\cE$ is \emph{$\mathcal{K}$-colocal}
    if for every $\cK$-colocal equivalence $f\colon X\to Y$ the induced
    map
$$
f_*\colon \Hom(Z,X)\longrightarrow \Hom(Z,Y)
$$
is an isomorphism in $\cV$.
\end{itemize}

\begin{defi}
An \emph{$(i,r)$-$\mathcal{K}$-colocalization functor} in $\cE$ is an
$(i,r)$-colocalization functor $(C,c)$ such that the class of $C$-colocal
objects coincides with the class of $\mathcal{K}$-local objects and the class
of $C$-colocal equivalences coincides with the class of $\mathcal{K}$-colocal
equivalences. We denote this colocalization functor by $C_{\mathcal{K}}$.
\end{defi}

\section{Solid monoids and rigid monoids}
\label{sol_rig_mon} In this section we define solid monoid and rigid monoid
in (enriched) monoidal categories as a generalization of  the algebraic
notion of solid ring~\cite{BS77, BK72, DS84} and rigid ring~\cite{CRT,
Sch73}. These special classes of monoids are closely related to localization
and colocalization functors.  In fact, as we will show, they all appear as
suitable localizations of the unit of the monoidal structure.

Throughout this section, we will implicitly assume that all localization and
colocalization functors with respect to morphisms and objects that are
mentioned exist.

Let $\cE$ be a closed symmetric monoidal category. Recall that a
\emph{monoid} $(R, \mu,\eta)$ in $\cE$ is an object $R$ equipped with two
morphisms $\mu\colon R\otimes R\rightarrow R$ and $\eta\colon I\rightarrow R$
such that the following diagrams commute:
\begin{equation}
\xymatrix{
R\otimes R\otimes R\ar[r]^-{1\otimes\mu}\ar[d]_{\mu\otimes 1} & R\otimes R
\ar[d]^{\mu} \\
R\otimes R \ar[r]_-{\mu} & R
}
\qquad
\xymatrix{
I\otimes R\ar[r]^{\eta\otimes 1}\ar[dr]_{\lambda} & R\otimes R\ar[d]^{\mu} &
R\otimes I\ar[l]_{1\otimes\eta}\ar[dl]^{\rho}\\
 & R, & }
\label{monoid}
\end{equation}
where $\lambda$ and $\rho$ are the natural isomorphisms coming from the fact
that $I$ is a left and right identity for the tensor product. A \emph{map of
monoids} between $(R,\mu,\eta)$ and $(R',\mu',\eta')$ is a map $f\colon
R\rightarrow R'$ compatible with the structure maps $\mu$, $\eta$, $\mu'$ and
$\eta'$, that is, such that $f\circ\mu=\mu'\circ({f\otimes f})$ and $f\circ
\eta=\eta'$.

\begin{defi}
A monoid $(R,\mu,\eta)$ in $\cE$ is called a \emph{solid monoid} if the
multiplication map $\mu$ is an isomorphism. A \emph{solid comonoid} is a
solid monoid in the opposite category $\cE^{\rm op}$.
\end{defi}
Note that if $R$ is a solid monoid, then by the commutativity of the second
diagram in~\eqref{monoid}, the morphisms $\eta\otimes 1$ and $1\otimes \eta$
are isomorphisms. In fact, this property characterizes solid monoids.
\begin{propo}
An object $R$ in $\cE$ is a solid monoid if and only if there exist a
morphism $\eta\colon I\rightarrow R$ such that both $\eta\otimes 1$ and
$1\otimes \eta$ are isomorphisms. \label{equiv_solid}
\end{propo}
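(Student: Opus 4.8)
The plan is to treat the two implications separately. The forward one is the observation made just before the statement: if $(R,\mu,\eta)$ is a solid monoid then $\mu$ is an isomorphism and the unit triangles of~\eqref{monoid} read $\mu\circ(\eta\otimes 1)=\lambda$ and $\mu\circ(1\otimes\eta)=\rho$, so that $\eta\otimes 1=\mu^{-1}\circ\lambda$ and $1\otimes\eta=\mu^{-1}\circ\rho$ are isomorphisms. The work is in the converse. Assume $\eta\colon I\to R$ is a morphism such that $\eta\otimes 1_R$ and $1_R\otimes\eta$ are isomorphisms; we must build a multiplication $\mu$ making $(R,\mu,\eta)$ a monoid with $\mu$ invertible. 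By Mac Lane's coherence theorem we may assume that $\cE$ is strict monoidal, since strictification is a monoidal equivalence and hence preserves monoids as well as both the hypotheses and the conclusion; thus $I\otimes X=X=X\otimes I$ and all structural morphisms are identities. Write $\eta_L:=\eta\otimes 1_R$ and $\eta_R:=1_R\otimes\eta$, viewed as isomorphisms $R\xrightarrow{\ \sim\ }R\otimes R$ (informally ``$r\mapsto 1\otimes r$'' and ``$r\mapsto r\otimes 1$''). The heart of the proof is the identity $\eta_L=\eta_R$, which is the categorical incarnation of the defining relation $1\otimes r=r\otimes 1$ of a solid ring.

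To establish $\eta_L=\eta_R$ I would pass to $R\otimes R\otimes R$ and use the ``unit insertion'' morphisms $a_0=\eta\otimes 1_R\otimes 1_R$ and $a_1=1_R\otimes\eta\otimes 1_R$, both from $R\otimes R$ to $R\otimes R\otimes R$. A direct computation shows $a_0\circ\eta_L=a_1\circ\eta_L$ (both equal $\eta\otimes\eta\otimes 1_R$), so cancelling the isomorphism $\eta_L$ gives $a_0=a_1$, i.e.\ $\eta_L\otimes 1_R=\eta_R\otimes 1_R$. Put $\sigma:=\eta_R^{-1}\circ\eta_L$; then $\eta_L=\eta_R\circ\sigma$, so by bifunctoriality $(\eta_R\otimes 1_R)\circ(\sigma\otimes 1_R)=\eta_R\otimes 1_R$, and cancelling the isomorphism $\eta_R\otimes 1_R$ yields $\sigma\otimes 1_R=\id_{R\otimes R}$. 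Finally, precomposing with the isomorphism $\eta_R=1_R\otimes\eta$ and using bifunctoriality again turns $\sigma\otimes 1_R=\id$ into $\eta_R\circ\sigma=\eta_R$, whence $\sigma=\id_R$ and $\eta_L=\eta_R$. I expect this final cancellation to be the main obstacle: the functor $-\otimes 1_R$ is not faithful in general, so one cannot merely ``divide by $R$'', and it is exactly here that invertibility of $1_R\otimes\eta$ enters a second time.

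Granting the lemma, set $\mu:=\eta_L^{-1}\colon R\otimes R\to R$, which is an isomorphism. Then $\mu\circ(\eta\otimes 1)=\eta_L^{-1}\circ\eta_L=\id_R$ and, by the lemma, $\mu\circ(1\otimes\eta)=\eta_L^{-1}\circ\eta_R=\id_R$, so both unit triangles of~\eqref{monoid} commute. For associativity, precompose the two morphisms $\mu\circ(\mu\otimes 1_R)$ and $\mu\circ(1_R\otimes\mu)$ with the isomorphism $\eta\otimes 1_R\otimes 1_R\colon R\otimes R\to R\otimes R\otimes R$; using $\mu\circ(\eta\otimes 1_R)=\id_R$ and bifunctoriality, each composite reduces to $\mu$, and since they agree after precomposition with an isomorphism they are equal. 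Hence $(R,\mu,\eta)$ is a monoid with invertible multiplication, that is, a solid monoid, which completes the proof. (As a byproduct such an $R$ is automatically commutative: writing $\gamma$ for the symmetry, $\mu\circ\gamma_{R,R}\circ(\eta\otimes 1)=\mu\circ(1\otimes\eta)\circ\gamma_{I,R}=\mu\circ(\eta\otimes 1)$ by naturality of $\gamma$ and the coherence identity $\rho\circ\gamma_{I,R}=\lambda$, and $\eta\otimes 1$ cancels.)
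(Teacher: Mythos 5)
Your proof is correct and follows essentially the same route as the paper: both reduce the converse to showing that the two unit-insertion maps $R\to R\otimes R$ coincide, prove this by cancelling the isomorphisms $\eta\otimes 1$ and $1\otimes\eta$ in turn (the paper's endomorphism $g$ with $g\circ\eta=\eta$ is exactly your $\sigma=\eta_R^{-1}\circ\eta_L$), and then take $\mu$ to be the common inverse. The differences are cosmetic: you invoke strictification where the paper carries $\lambda$ and $\rho$ explicitly, and you spell out the associativity and unit checks that the paper leaves as straightforward.
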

\begin{proof}
One implication is clear from the previous remark. For the converse, assume
that we have an object $R$ in $\cE$ and a morphism $\eta\colon I\to R$ such
that $\eta\otimes 1$ and $1\otimes \eta$ are isomorphisms. First note that if
$f\colon R\to R$ is any map such that $f\circ \eta=\eta$, then $f=1$. Indeed,
we have a commutative diagram
$$
\xymatrix{
R\otimes I\ar[r]^{f\otimes 1_I} \ar[d]_{1\otimes \eta} & R\otimes I
\ar[d]^{1\otimes \eta} \\
R\otimes R \ar[r]_{f\otimes 1} & R\otimes R,
}
$$
where the vertical arrows are isomorphisms. Therefore
$$
f\otimes 1_I=(1\otimes \eta)^{-1}\circ (f\otimes 1)\circ (1\otimes \eta).
$$
But if $f\circ \eta=\eta$, then $(f\otimes 1)\circ(\eta\otimes 1)=\eta\otimes
1$. Since $\eta\otimes 1$ is an isomorphism, this forces $f\otimes 1=1\otimes
1$, and thus $f\otimes 1_I=1\otimes 1_I=1$. Consider now the following
isomorphism $g\colon R\to R$ defined as the composite
$$
\xymatrix{
R\ar[r]^{\lambda^{-1}}& I\otimes R\ar[r]^{\eta\otimes 1} &
R\otimes R\ar[r]^-{(1\otimes \eta)^{-1}} & R\otimes I \ar[r]^-{\rho} & R .
}
$$
Since $((\eta\otimes 1)\circ \lambda^{-1})\circ \eta=(\eta\otimes\eta)\circ
\lambda^{-1}=(\eta\otimes\eta)\circ \rho^{-1}=((1\otimes \eta)\circ
\rho^{-1})\circ \eta$, we have that $g\circ\eta=\eta$ and so $g=1$. This
implies that the following diagram of isomorphisms commute
$$
\xymatrix{
I\otimes R\ar[r]^{\eta\otimes 1} & R\otimes R & R\otimes I\ar[l]_{1\otimes\eta}\\
 & \ar[ul]^{\lambda^{-1}}\ar[ur]_{\rho^{-1}}R. & }
$$
Hence, there exists a unique isomorphism $\mu\colon R\otimes R\to R$
rendering the two triangles commutative. It is straightforward to check that
$(R, \mu, \eta)$ is a solid monoid.
\end{proof}

For any category $\cC$, the category of endofunctors ${\rm Fun}(\cC, \cC)$
admits a monoidal structure, where the tensor product is given by composition
and the unit is the identity functor. If $\cE$ is a closed symmetric monoidal
category, then we can define a functor $F \colon \cE\to {\rm Fun}(\cE, \cE)$
by setting $F(X)(-)=-\otimes X$ and another functor $G \colon \cE^{\rm op}\to
{\rm Fun}(\cE, \cE)$ by setting $G(X)(-)=\Hom_{\cE}(X,-)$. One can check that
both functors are faithful and reflect isomorphisms.

Moreover, $F$ preserves solid monoids and the functor $G$ sends solid monoids
to solid comonoids. Indeed, if $(R,\mu,\eta)$ is a solid monoid, then $(F(R),
{\rm Id}\otimes\eta)$ is a localization functor in $\cE$, that is, a solid monoid in
${\rm Fun}(\cE, \cE)$. In fact, $F(R)=L_R$ is the $R$-localization defined in
Section~\ref{section_f_loc}, since both functors have the same class of local
equivalences. Similarly, $(G(R), \Hom_{\cE}(\eta,{\rm Id}))$ is a colocalization
functor in $\cE$, that is, a solid comonoid in ${\rm Fun}(\cE, \cE)$. The
functor $G(R)$ is precisely the colocalization functor $C_R$ of
Section~\ref{section_coloc}.

\begin{theo}
Let $\cE$ be a closed symmetric monoidal category. Then, there is a one to
one correspondence between the following classes:
\begin{itemize}
\item[{\rm (i)}] Solid monoids.
\item[{\rm (ii)}] Smashing localization functors.
\item[{\rm (iii)}] Mapping colocalization functors.
\end{itemize}
\label{char_solid}
\end{theo}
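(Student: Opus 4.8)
The plan is to construct maps in all three directions and show they are mutually inverse, up to isomorphism of monoids and natural isomorphism of (co)augmented functors. In the direction from a solid monoid $(R,\mu,\eta)$ the assignments have essentially been described in the two paragraphs preceding the statement: send $(R,\mu,\eta)$ to the smashing localization $L_R=(-\otimes R,\,\mathrm{Id}\otimes\eta)=F(R)$ and to the mapping colocalization $C_R=(\Hom_\cE(R,-),\,\Hom_\cE(\eta,-))=G(R)$. That these are genuine localization and colocalization functors uses Proposition~\ref{equiv_solid}: since $\eta\otimes 1$ and $1\otimes\eta$ are isomorphisms, the maps $l_{L_RX}=1_{X\otimes R}\otimes\eta$ and $c_{C_RX}$ are invertible, and the identities $l_{L_RX}=L_Rl_X$, $c_{C_RX}=C_Rc_X$ are immediate from the unit axioms for $R$.

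Next I would build the inverse of $R\mapsto L_R$. Given a smashing localization $(L,l)$, put $R:=LI$, $\eta:=l_I$, and let $\mu\colon LI\otimes LI\to LI$ be the composite $LI\otimes LI\cong L(LI\otimes I)=L(LI)\xrightarrow{\,(l_{LI})^{-1}\,}LI$, using the natural isomorphism $X\otimes LY\cong L(X\otimes Y)$ recorded after the definition of a smashing localization together with the idempotency of $L$. Then $\mu$ is an isomorphism and $(R,\mu,\eta)$ is a monoid: the unit axioms hold because, by idempotency, both $\eta\otimes 1$ and $1\otimes\eta$ are the inverse of $\mu$, and associativity is a short diagram chase from the naturality of $l$ and the relation $l_{LI}=Ll_I$. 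Being a monoid with invertible multiplication, $R$ is solid. For the round trips: starting from a solid monoid $R$ one has $L_RI=I\otimes R\cong R$, and the extracted multiplication is $(1\otimes\eta)^{-1}=\mu$ and the extracted unit is $l_I=\eta$, so $(R,\mu,\eta)$ is recovered; starting from $(L,l)$ one has $-\otimes LI\cong L$ via the isomorphism $X\otimes LI\cong LX$ (again the remark after the definition of smashing), and this carries $\mathrm{Id}\otimes l_I$ to $l$ because $(L,l)$ is canonically isomorphic to $(L,\mathrm{Id}\otimes l_I)$.

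For the colocalization side I would invert $R\mapsto C_R=G(R)$. This map is injective because $G$ is faithful and reflects isomorphisms: an isomorphism $G(R)\cong G(R')$ of colocalization functors compatible with the augmentations is $\Hom_\cE(\theta,-)$ for an isomorphism $\theta\colon R'\to R$, which by faithfulness is an isomorphism of monoids. For surjectivity, given a mapping colocalization $(C,c)$ with $CX=\Hom_\cE(A,X)$, I would pass to the left adjoint $-\otimes A$ of $C$ coming from the tensor--hom adjunction: the mate of $c$ is a coaugmentation $\mathrm{Id}\to -\otimes A$, and the idempotency of the comonad $C$ translates (the left adjoint of an idempotent comonad, when it exists, is an idempotent monad) into the statement that $(-\otimes A,\text{mate of }c)$ is a localization functor, necessarily smashing. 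By the bijection of the previous paragraph this smashing localization comes from a solid monoid structure on $A=(-\otimes A)(I)$, and unwinding the adjunction identifies $G$ of this solid monoid with $(C,c)$; equivalently, one can run the enriched Yoneda lemma directly, transporting $c$ and the comultiplication of $C$ across $\Hom_\cE(A,\Hom_\cE(A,-))\cong\Hom_\cE(A\otimes A,-)$ to morphisms $\eta\colon I\to A$ and $\mu\colon A\otimes A\to A$, with the comonad counit and coassociativity axioms becoming the monoid axioms and idempotency making $\mu$ invertible.

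The part I expect to require the most care is this colocalization direction: verifying that the mate of the augmentation of a mapping colocalization is again an idempotent coaugmented functor (so that one lands back in smashing localizations, hence in solid monoids), and checking that the various canonical isomorphisms --- $X\otimes LY\cong L(X\otimes Y)$, $(L,l)\cong(L,\mathrm{Id}\otimes l_I)$, and $\Hom_\cE(A,\Hom_\cE(A,-))\cong\Hom_\cE(A\otimes A,-)$ --- are compatible with all the structure maps, so that the three assignments really are mutually inverse. None of this is conceptually hard, but it is where all the bookkeeping lives.
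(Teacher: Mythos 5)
Your proposal is correct in substance, and on the solid/smashing leg it is essentially the paper's argument: the paper also sends $(R,\mu,\eta)$ to $(X\mapsto X\otimes R,\ \mathrm{Id}\otimes\eta)$ and, conversely, observes that for a smashing $(L,l)$ the maps $l_I\otimes 1$ and $1\otimes l_I$ are isomorphisms and then invokes Proposition~\ref{equiv_solid}, which packages exactly the construction of $\mu$ and the verification of the monoid axioms that you do by hand (one small gloss on your side: the identity $l_{L_RX}=L_Rl_X$, i.e.\ $1_X\otimes(1_R\otimes\eta)=1_X\otimes(\eta\otimes 1_R)$ modulo unit isomorphisms, does not follow from the unit axioms alone but from solidity, since both sides are $1_X\otimes\mu^{-1}$; for a general monoid this functor is pointed but not idempotent). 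Where you genuinely diverge is the mapping-colocalization leg. The paper treats (iii) symmetrically to (ii): from $(C,c)$ with $CX=\Hom_{\cE}(A,X)$ it extracts $\eta\colon I\to A$ by Yoneda from $c\colon\Hom_{\cE}(A,-)\to\Hom_{\cE}(I,-)$, shows that $\eta\otimes 1$ and $1\otimes\eta$ are isomorphisms using $CCX\cong CX$ together with $\Hom_{\cE}(A,\Hom_{\cE}(A,-))\cong\Hom_{\cE}(A\otimes A,-)$, and applies Proposition~\ref{equiv_solid} again --- this is precisely the ``equivalently'' alternative you sketch at the end. Your primary route instead reduces (iii) to (ii) by passing to the left adjoint $-\otimes A$ and transferring idempotency across the adjunction; this is legitimate and arguably more conceptual (it exhibits the adjunction between $L_R$ and $C_R$ and hence the (ii)--(iii) bijection directly), but the parenthetical fact it rests on --- that the mate of an idempotent augmentation on a right adjoint is an idempotent coaugmentation on the left adjoint --- is the crux and cannot be waved through, since mates of isomorphisms are not isomorphisms in general. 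It is true, but proving it requires first upgrading $(C,c)$ to an idempotent comonad (comultiplication $(c_{CX})^{-1}$) and then invoking the classical adjoint monad/comonad correspondence (the Eilenberg--Moore categories of $-\otimes A$ and $\Hom_{\cE}(A,-)$ are isomorphic over $\cE$, and idempotency amounts to full faithfulness of the forgetful functor), or else falling back on the paper's Yoneda computation, which is shorter. Note also that the paper does not verify the round trips at all, so your attention to them is extra care rather than a divergence; the one point there that deserves the same scrutiny as the paper's own Yoneda step is the identification of an arbitrary (merely natural) augmentation of $\Hom_{\cE}(A,-)$, or of an isomorphism $G(R)\cong G(R')$, with one induced by a morphism of objects, which strictly speaking uses enriched naturality.
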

\begin{proof}
Let $(R,\mu,\eta)$ be a solid monoid in $\cE$. Then the functor $(F(R),
{\rm Id}\otimes\eta)$ defined above is a localization functor in $\cE$ that is also
smashing and $F(R)(I)=R$. Conversely, let $(L,l)$ be a smashing localization
functor. Then, the morphisms
\begin{gather}\notag
LI\cong I\otimes LI\stackrel{l_I\otimes 1}{\longrightarrow}LI\otimes
LI=L(LI),\\ \notag LI\cong LI\otimes I\stackrel{1\otimes
l_I}{\longrightarrow}LI\otimes LI=L(LI)
\end{gather}
are isomorphisms. Thus, by Proposition~\ref{equiv_solid} $LI$ is a solid
monoid.

Similarly, the augmented functor $(G(R), \Hom_{\cE}(\eta,{\rm Id}))$ is a mapping
colocalization functor and $G(R)(I)=\Hom_{\cE}(R, I)$. And conversely, if
$(C,c)$ is a mapping colocalization functor, say $CX=\Hom_{\cE}(A,X)$, then
$c$ gives natural transformation
$$
\Hom_{\cE}(A, -)=C\longrightarrow {\rm Id}=\Hom_{\cE}(I,-).
$$
By the Yoneda lemma this corresponds to a morphism $\eta\colon I\to A$. But
this morphism satisfies that $\eta\otimes 1$ and $1\otimes\eta$ are
isomorphisms, by using the Yoneda lemma and the fact that $CCX\cong CX$. Again, by
Proposition~\ref{equiv_solid} we infer that $A$ is a solid monoid.
\end{proof}

\begin{rem}
\label{rem_equiv} The previous result tells us that for a monoid
$(R,\mu,\eta)$ in $\cE$ the following assertions are equivalent:
\begin{itemize}
\item[{\rm (i)}] $R$ is solid.
\item[{\rm (ii)}] The functor $(L,l)$ defined by $LX=X\otimes R$ and
    $l={\rm Id}\otimes\eta$ is a localization functor and $R\cong LI$.
\item[{\rm (iii)}] The functor $(C,c)$ defined by $CX=\Hom_{\cE}(R, X)$
    and $c=\Hom_{\cE}(\eta, {\rm Id})$ is a colocalization functor.
\end{itemize}
\end{rem}

\begin{propo}
Let $\cE$ be a closed symmetric monoidal category. If $L$ is a smashing
localization functor in $\cE$, then  $L$ is equivalent to the closed
localization functor $L_{l_I}$, where $l_I\colon I\to LI$ denotes the
localization of the unit. \label{prop_smashing_mor}
\end{propo}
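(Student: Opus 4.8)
The plan is to produce a natural isomorphism between $L$ and the morphism‑localization functor $L_{l_I}$ by showing they have the same local objects and the same local equivalences, and then invoking the uniqueness of $(i,r)$‑localization functors attached to a class of morphisms. First I would recall from the discussion after the definition of smashing localization that if $LX = X\otimes A$ then $A \cong LI$ and there is a canonical natural isomorphism $\phi\colon L\to L$ with $\phi\circ l = {\rm Id}\otimes l_I$; so without loss of generality we may take $LX = X\otimes LI$ with coaugmentation $l = {\rm Id}\otimes l_I$. By Proposition~\ref{smashing}(i), $L$ is in particular a closed localization functor (the case $(i,r)=({\rm Id},{\rm Id})$), so it makes sense to compare it with the closed localization $L_{l_I}$ with respect to the single morphism $l_I\colon I\to LI$.

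The key step is the identification of the two classes of local equivalences. A morphism $f\colon X\to Y$ is an $L$‑local equivalence iff $f\otimes LI$ is an isomorphism, i.e.\ iff $f$ is an $(LI)$‑equivalence in the sense of Section~\ref{section_f_loc}; and by the remark there, $L = L_{LI}$. So it suffices to show that the class of $(LI)$‑equivalences coincides with the class of $l_I$‑local equivalences, equivalently that the class of $(LI)$‑local objects coincides with the class of $l_I$‑local objects. One inclusion is easy: if $Z$ is $L$‑local then $l_I^*\colon \Hom_{\cE}(LI,Z)\to\Hom_{\cE}(I,Z)$ is an isomorphism since $l_I$ is an $L$‑local equivalence and $Z$ is $L$‑local, so $Z$ is $l_I$‑local. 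For the converse, suppose $Z$ is $l_I$‑local; I would use the fact that for a smashing $L$ one has $X\otimes LY\cong L(X\otimes Y)$, together with the enriched/closed orthogonality of $L$, to reduce orthogonality of $Z$ against an arbitrary $L$‑local equivalence $h$ to orthogonality against $l_I$. Concretely, for any $X$ the map $l_X = {\rm Id}_X\otimes l_I\colon X\to X\otimes LI$ is obtained from $l_I$ by tensoring, and $\Hom_{\cE}(X\otimes LI, Z)\cong \Hom_{\cE}(LI,\Hom_{\cE}(X,Z))$ while $\Hom_{\cE}(X,Z)\cong\Hom_{\cE}(I,\Hom_{\cE}(X,Z))$, so being $l_I$‑local makes $Z$ orthogonal to every $l_X$; since the $l_X$ generate the $L$‑local equivalences (an object is $L$‑local as soon as it is orthogonal to all $l_X$, as noted after Definition~\ref{def_loc_coloc}), $Z$ is $L$‑local.

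Having matched local objects and (hence, by the orthogonality characterisations recalled in Section~\ref{loc_coloc_mon}) local equivalences, both $L$ and $L_{l_I}$ are $({\rm Id},{\rm Id})$‑localization functors with the same local objects and the same local equivalences; a coaugmented idempotent functor is determined up to canonical natural isomorphism by its class of local equivalences (the value $LX$ is the terminal $l_I$‑local‑equivalence out of $X$), so $L\cong L_{l_I}$ as coaugmented functors, which is the assertion. The main obstacle I anticipate is the converse inclusion of local objects, i.e.\ propagating orthogonality from the single morphism $l_I$ to all $L$‑local equivalences; the smashing hypothesis (through the isomorphisms $X\otimes LY\cong L(X\otimes Y)$ and the consequent fact that every $L$‑local equivalence is, up to the action of $L$, built from tensoring $l_I$ with objects) is exactly what makes this go through, and care is needed to phrase it using only the closed‑category orthogonality statements already available rather than any generator/small‑object argument.
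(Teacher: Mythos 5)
Your overall route is the paper's: both proofs identify $L$ with $L_{l_I}$ by matching the local data, one direction coming from the fact that a smashing localization is a closed localization (Proposition~\ref{smashing}), the other by reducing arbitrary $L$-local equivalences to the maps $1_X\otimes l_I$. The paper phrases the second direction as ``every $LI$-equivalence is an $l_I$-local equivalence'' via a commutative square, while you phrase it as ``every $l_I$-local object is $L$-local''; these are equivalent. However, the concrete justification you give for this key step does not stand as written. Under the isomorphisms you display, $\Hom_{\cE}(X\otimes LI,Z)\cong\Hom_{\cE}(LI,\Hom_{\cE}(X,Z))$ and $\Hom_{\cE}(X,Z)\cong\Hom_{\cE}(I,\Hom_{\cE}(X,Z))$, the map $\Hom_{\cE}(1_X\otimes l_I,Z)$ corresponds to $\Hom_{\cE}(l_I,\Hom_{\cE}(X,Z))$, so what you actually need is that $\Hom_{\cE}(X,Z)$ is $l_I$-local, not that $Z$ is; concluding ``being $l_I$-local makes $Z$ orthogonal to every $l_X$'' from these two isomorphisms alone conflates $Z$ with $\Hom_{\cE}(X,Z)$, and that is precisely the point in question.

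The repair is short and uses only what the paper already has, in either of two ways. First, apply Corollary~\ref{coro2.5} to the closed localization $L_{l_I}$: since $l_I$ is an $l_I$-local equivalence, so is $1_X\otimes l_I$ for every $X$ (equivalently, $\Hom_{\cE}(X,Z)$ is again $l_I$-local when $Z$ is), and the internal orthogonality defining $L_{l_I}$ then makes $\Hom_{\cE}(1_X\otimes l_I,Z)$ an isomorphism; this is exactly the paper's argument, packaged there as a commutative square comparing $\Hom_{\cE}(f,Z)$ with $\Hom_{\cE}(f\otimes LI,Z)$. Second, and even more directly, use the adjunction the other way round: $\Hom_{\cE}(X\otimes W,Z)\cong\Hom_{\cE}(X,\Hom_{\cE}(W,Z))$ naturally in $W$, so that $\Hom_{\cE}(1_X\otimes l_I,Z)$ is identified with $\Hom_{\cE}(X,l_I^*)$, where $l_I^*\colon\Hom_{\cE}(LI,Z)\to\Hom_{\cE}(I,Z)$ is an isomorphism precisely because $Z$ is $l_I$-local. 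With either fix, the remainder of your argument (orthogonality to all the coaugmentation maps forces $Z$ to be $L$-local, e.g.\ by the retract argument or the remark after Definition~\ref{def_loc_coloc}, and equality of local objects and local equivalences determines the localization up to natural isomorphism via the universal property~\eqref{univ_prop}) is correct and in substance the same as the paper's proof.
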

\begin{proof}
By Theorem~\ref{char_solid} and Remark~\ref{rem_equiv}, if $L$ is smashing,
then $L=L_R$, where $R\cong LI$. The morphism $l_I$ is an $R$-equivalence,
since $R$ is a solid monoid. Thus, every $L$-local equivalence is an
$R$-equivalence. Conversely, let $f\colon X\to Y$ be an
$R$\nobreakdash-equivalence, $Z$ any $L$-local object and consider the
following commutative diagram
$$
\xymatrix{
\Hom_{\cE}(Y,Z)\ar[r] & \Hom_{\cE}(X, Z)  \\
\Hom_{\cE}(Y\otimes LI,Z)\ar[r]\ar[u] & \Hom_{\cE}(X\otimes LI, Z) \ar[u]
}
$$
The vertical arrows are isomorphisms since tensoring $l_I$ with any object is
an $l_I$\nobreakdash-local equivalence by Corollary~\ref{coro2.5}. The bottom
arrow is also an isomorphism since $X\to Y$ is an $R$-equivalence, hence
$X\otimes LI\to Y\otimes LI$ is an isomorphism. Therefore, the top arrow is
also an isomorphism.
\end{proof}

\subsection{Modules over solid monoids}
Recall that if $(R,\mu,\eta)$ is a monoid in a closed symmetric monoidal
category $\cE$, then an $R$-module consists of an object $X$ together with a
morphism $m\colon R\otimes X\to X$ such that the following diagrams commute:
$$
\xymatrix{
R\otimes R\otimes X \ar[r]^-{\mu\otimes 1} \ar[d]_{1\otimes m} &
R\otimes X \ar[d]^m \\
R\otimes X \ar[r]^-m & X}\qquad
\xymatrix{
I\otimes X\ar[r]^{\eta\otimes 1}\ar[dr]_{\lambda} & R\otimes X\ar[d]^m &
X\otimes I\ar[l]_{1\otimes\eta}\ar[dl]^{\rho}\\
 &X. & }
$$
Morphisms of $R$-modules are those compatible with the module structure. We
will denote by $R$-mod the category of $R$-modules.
\begin{theo}
Let $\cE$ be a closed monoidal category and let $(R,\mu, \eta)$ be a solid
monoid in $\cE$. We denote by {\rm $L_R$-loc} the full subcategory of  $L_R$-local
objects and by {\rm $C_R$\nobreakdash-coloc} the full subcategory of
$C_R$\nobreakdash-colocal objects. Then there is an equivalence of categories
$$
L_R{\rm{\text-loc}}\cong R{\rm{\text-mod}}\cong C_R{\rm{\text-coloc}}.
$$
\label{equiv_cat}
\end{theo}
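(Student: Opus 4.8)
The plan is to establish the two equivalences $R\text{-mod}\simeq L_R\text{-loc}$ and $R\text{-mod}\simeq C_R\text{-coloc}$ separately, exploiting heavily the fact that $\mu\colon R\otimes R\to R$ is an isomorphism and hence that $\eta\otimes 1$ and $1\otimes\eta$ are isomorphisms by Proposition~\ref{equiv_solid}. First I would show that for a solid monoid the forgetful functor $R\text{-mod}\to\cE$ is fully faithful and its essential image is exactly the class of $L_R$-local objects. The key observation is that an $R$-module structure on $X$, if it exists, is \emph{unique}: the structure map $m\colon R\otimes X\to X$ is forced by the unit axiom to make the triangle with $\eta\otimes 1\colon X\cong I\otimes X\to R\otimes X$ commute, and since $\eta\otimes 1$ is an isomorphism this determines $m=\lambda\circ(\eta\otimes 1)^{-1}$ (using $\lambda\colon I\otimes X\cong X$). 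One then checks the associativity square commutes automatically, so every object that can be given a module structure has exactly one, and every morphism in $\cE$ between such objects is automatically a module map. Thus $R\text{-mod}$ is (equivalent to) a full subcategory of $\cE$.

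Next I would identify that full subcategory with $L_R\text{-loc}$. Recall $L_RX=X\otimes R$ with coaugmentation $l_X={\rm Id}_X\otimes\eta$. An object $X$ is $L_R$-local iff $l_X\colon X\to X\otimes R$ is an isomorphism (equivalently, iff $\cE(-,X)$ sends $L_R$-equivalences to bijections, but localness of the image objects is easiest via the coaugmentation being invertible on $X$, since $l_{L_RX}$ is always invertible and $L_R$-local objects are retracts of such). If $X$ carries a module structure $m$, then $m\circ l_X=m\circ({\rm Id}\otimes\eta)=\lambda={\rm Id}$ after the canonical identification, and $l_X\circ m={\rm Id}_{X\otimes R}$ follows from the associativity/unit axioms together with $\mu$ being an isomorphism; hence $l_X$ is an isomorphism and $X$ is $L_R$-local. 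Conversely, if $X$ is $L_R$-local, so $l_X$ is invertible, define $m\colon R\otimes X\to X$ as the composite $R\otimes X\xrightarrow{1\otimes l_X}R\otimes X\otimes R\cong X\otimes R\otimes R\xrightarrow{1\otimes\mu}X\otimes R\xrightarrow{l_X^{-1}}X$ (using symmetry); one verifies the module axioms using that $\mu$ is associative and unital and an isomorphism. This gives mutually inverse assignments, yielding $R\text{-mod}\simeq L_R\text{-loc}$.

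For the colocalization side I would argue dually, or more slickly, use the adjunction. Since $C_RX=\Hom_{\cE}(R,X)$ with augmentation $c=\Hom_{\cE}(\eta,{\rm Id})$, an object is $C_R$-colocal iff $c_X\colon\Hom_{\cE}(R,X)\to X$ is an isomorphism. The cleanest route is to observe that for a solid monoid the free-module functor $-\otimes R\colon\cE\to R\text{-mod}$ and the cofree-module functor $\Hom_{\cE}(R,-)\colon\cE\to R\text{-mod}$ both land in the same full subcategory (the module structures again being unique), and that the counit/unit of the relevant adjunctions restrict to isomorphisms precisely on $L_R$-local, resp.\ $C_R$-colocal, objects. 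Concretely, $R\otimes\Hom_{\cE}(R,X)\cong\Hom_{\cE}(R,X)$ via $\mu$, so every $C_R$-colocal object is an $R$-module; and conversely an $R$-module $X$ satisfies $\Hom_{\cE}(\eta,{\rm Id})\colon\Hom_{\cE}(R,X)\to\Hom_{\cE}(I,X)\cong X$ is an isomorphism because its inverse is induced by $m$ together with $\eta\otimes 1$ invertible. This gives $R\text{-mod}\simeq C_R\text{-coloc}$, completing the chain.

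The main obstacle I anticipate is not any single hard computation but rather bookkeeping the canonical isomorphisms ($\lambda$, $\rho$, the symmetry, and $\mu^{-1}$) so that the two candidate inverse functors genuinely compose to the identity on the nose, and verifying that the module structures produced from localness/colocalness actually satisfy the associativity pentagon — this is where solidity of $R$ (i.e.\ $\mu$ invertible) is used essentially, since it lets one cancel $\mu$ and reduce associativity for $m$ to associativity for $\mu$. A secondary subtlety is confirming that ``$L_R$-local'' really means ``$l_X$ is an isomorphism'' and not merely ``$X$ is a retract of some $L_RY$'': this follows because $L_R$ is idempotent and $L_R$-local objects are closed under retracts, and a retract of an object on which $l$ is invertible still has $l$ invertible by naturality; I would spell this out briefly at the start.
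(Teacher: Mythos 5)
Your outline is correct in substance and ultimately proves the same two facts as the paper, namely that for every $R$-module $X$ the canonical maps $X\to X\otimes R$ and $\Hom_{\cE}(R,X)\to X$ are isomorphisms (after which the identification of $R$-mod with $L_R$-loc and $C_R$-coloc, the uniqueness of the module structure, and the full faithfulness of the forgetful functor are bookkeeping). But the mechanism is different: the paper disposes of both invertibility statements with a one-line retract argument --- the unit axiom exhibits $X$ as a retract of $X\otimes R=L_RX$ and of $\Hom_{\cE}(R,X)=C_RX$, local and colocal objects are closed under retracts, and the coaugmentation (resp.\ augmentation) is invertible on local (resp.\ colocal) objects --- whereas you verify invertibility directly. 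Your version is longer but makes explicit the points the paper leaves implicit (uniqueness of $m$, automatic compatibility of morphisms, the construction of a module structure on a local object); the paper's argument buys brevity.

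Two cautions about the direct route, since that is where all the content sits. First, Proposition~\ref{equiv_solid} gives invertibility of $\eta\otimes 1_R$ and $1_R\otimes\eta$ only; the invertibility of $\eta\otimes 1_X\colon I\otimes X\to R\otimes X$ for a general $R$-module $X$ is exactly what has to be proved, so the ``since $\eta\otimes 1$ is an isomorphism'' step in your first paragraph must come after, not before, the verification in your second paragraph. Second, the asserted identity $l_X\circ m=\id$ is true but needs an argument; two quick ones are: (a) by naturality of $l$ applied to the retraction $m$ and the section $l_X$, the map $l_X$ is a retract, in the arrow category, of $l_{X\otimes R}$, which is invertible because $1_R\otimes\eta$ is, and a retract of an isomorphism is an isomorphism (this is really the paper's retract argument resurfacing); or (b) the computation that $1_R\otimes m=\mu\otimes 1_X$, since both become identities after precomposition with the isomorphism $(1_R\otimes\eta)\otimes 1_X$, whence (suppressing the symmetry between $R\otimes X$ and $X\otimes R$ and writing $t_X=(\eta\otimes 1_X)\circ\lambda_X^{-1}$) one gets $t_X\circ m=(1_R\otimes m)\circ(\mu^{-1}\otimes 1_X)=\id$; note that associativity of $m$ is not actually needed, only the unit axiom and solidity. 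For the colocal half you can avoid a dual verification altogether: $L_R$ is smashing, hence a closed localization by Proposition~\ref{smashing}; $\eta$ is an $L_R$-equivalence and every $R$-module is $L_R$-local by the first half, so $\Hom_{\cE}(\eta,X)$ is an isomorphism --- this is precisely your claim that $\eta^*$ has an inverse induced by $m$. With these repairs your proposal is a complete proof.
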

\begin{proof}
If $X$ is an $R$-module, then $X$ is a retract of $X\otimes R$ and also a
retract of $\Hom_{\cE}(R,X)$. But, by Remark~\ref{rem_equiv}, $LX=X\otimes R$
and $CX=\Hom_{\cE}(R, X)$ define a localization and a colocalization functor,
namely $L_R$ and $C_R$, respectively. Thus, $X$ is a retract of an
$L_R$-local object and also a retract of a $C_R$-colocal object. Since local
and colocal objects are closed under retracts, the natural maps
$$
X\cong X\otimes I\stackrel{1\otimes\eta}{\longrightarrow} X\otimes R \;\mbox{ and }\;
\Hom_{\cE}(R, X)\stackrel{\eta^*}{\longrightarrow}\Hom_{\cE}(I,X)\cong X
$$
are isomorphisms.
\end{proof}
Note that this theorem together with Proposition~\ref{prop_smashing_mor}
imply that if $(R,\mu,\eta)$ is a solid monoid, then the categories $R$-mod
and the full subcategory of $\eta$-local objects are also equivalent, where
$\eta\colon I\to R$ is the unit of the ring.

\subsection{Rigid monoids in enriched categories}
Let $\cE$ be a closed symmetric monoi\-dal $\cV$-category, with associated
adjunction $i\colon \cV\leftrightarrows \cE\colon r$.
\begin{defi}
A monoid $(R,\mu,\eta)$ in $\cE$ is called an \emph{$(i,r)$-rigid monoid} if
the induced morphism
$$
\eta^*\colon \Hom(R,R)\longrightarrow \Hom(I,R)
$$
is an isomorphism in $\cV$.
\end{defi}

Observe that if $\cE$ is a $\cV$-category, then solid monoids in $\cE$ are
defined using the tensor product in $\cE$, while rigid monoids in $\cE$ are
defined in terms of the $\cV$\nobreakdash-enrichment, which depends on the
adjunction $(i,r)$. However, as we will prove, the class of solid monoids is
contained in the class of rigid monoids.

\begin{propo}
Let $\cE$ be a closed symmetric monoidal $\cV$-category, with associated
adjunction $i\colon \cV\leftrightarrows \cE\colon r$. If $(R,\mu,\eta)$ is a
solid monoid in $\cE$, then $R$ is an $(i,r)$-rigid monoid.
\label{solid_is_rigid}
\end{propo}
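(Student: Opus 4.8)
The plan is to reduce the statement to facts already established about the functor $X\mapsto\Hom_{\cE}(R,X)$ and then transport the conclusion across the adjunction $i\colon\cV\leftrightarrows\cE\colon r$. Concretely, since $(R,\mu,\eta)$ is solid, Remark~\ref{rem_equiv} (equivalently Theorem~\ref{char_solid}) tells us that the augmented endofunctor $(C_R,c)$ with $C_R X=\Hom_{\cE}(R,X)$ and $c=\Hom_{\cE}(\eta,\mathrm{Id})$ is a colocalization functor on $\cE$. Its component at an object $X$ is the composite $\Hom_{\cE}(R,X)\xrightarrow{\;\eta^*\;}\Hom_{\cE}(I,X)\cong X$, i.e.\ ``evaluation at $\eta$''; so at $X=R$ the component $c_R$ is, up to the canonical identification $\Hom_{\cE}(I,R)\cong R$, exactly the morphism $\Hom_{\cE}(\eta,R)\colon\Hom_{\cE}(R,R)\to\Hom_{\cE}(I,R)$ whose image under $r$ is the map $\eta^*$ in the definition of rigidity.

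The first key step is to observe that $R$, equipped with its own multiplication $\mu$, is an $R$-module, hence a $C_R$-colocal object. Indeed, by Theorem~\ref{equiv_cat} the $C_R$-colocal objects are precisely the $R$-modules; more concretely, the proof of that theorem shows directly that for every $R$-module $X$ the natural map $\Hom_{\cE}(R,X)\xrightarrow{\eta^*}\Hom_{\cE}(I,X)\cong X$ is an isomorphism in $\cE$. Specialising this to $X=R$, I conclude that $\Hom_{\cE}(\eta,R)\colon\Hom_{\cE}(R,R)\to\Hom_{\cE}(I,R)$ is an isomorphism in $\cE$. (Alternatively, one can make the inverse explicit: the adjoint $\widetilde{\mu}\colon R\cong\Hom_{\cE}(I,R)\to\Hom_{\cE}(R,R)$ of $\mu$ is a two-sided inverse, one composite being the identity by the unit axiom for $\mu$, and the other by the argument in Proposition~\ref{equiv_solid} showing that an endomorphism of a solid $R$ is determined by its value on $\eta$.)

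The final step is purely formal: by definition $\Hom(-,-)=r\Hom_{\cE}(-,-)$, so the map $\eta^*\colon\Hom(R,R)\to\Hom(I,R)$ occurring in the definition of an $(i,r)$-rigid monoid is exactly $r$ applied to $\Hom_{\cE}(\eta,R)$. Since every functor preserves isomorphisms, $r(\Hom_{\cE}(\eta,R))$ is an isomorphism in $\cV$, and therefore $R$ is an $(i,r)$-rigid monoid.

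I do not expect a real obstacle here; the only thing requiring care is the bookkeeping of the canonical isomorphisms $\Hom_{\cE}(I,-)\cong\mathrm{Id}$ used to identify the counit $c_R$ of the colocalization with the evaluation-at-$\eta$ map $\eta^*$, and to check that the square relating $c_R$ to $\eta^*$ genuinely commutes. It is also worth stating explicitly in the write-up that the argument is uniform in the chosen $\cV$-structure $(i,r)$, since $r$ always preserves isomorphisms: this is precisely why solidity, a condition formulated without reference to $(i,r)$, forces $(i,r)$-rigidity for every such adjunction.
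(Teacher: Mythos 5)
Your argument is correct, but it reaches the conclusion by a different route than the paper. The paper works on the localization side: solidity gives the smashing localization $L_R=-\otimes R$ (Theorem~\ref{char_solid}), Proposition~\ref{smashing} upgrades it to an $(i,r)$-localization for \emph{every} enrichment via an enriched Yoneda/adjunction computation, and then the enriched orthogonality~\eqref{enriched_orthog} applied to the $L$-equivalence $\eta=l_I$ and the $L$-local object $Z=LI\cong R$ yields the isomorphism $\Hom(R,R)\to\Hom(I,R)$ in $\cV$. You instead work on the colocalization/module side: $C_R=\Hom_{\cE}(R,-)$ is a colocalization (Remark~\ref{rem_equiv}), $R$ is an $R$-module via $\mu$, so the argument in the proof of Theorem~\ref{equiv_cat} gives that $\eta^*\colon\Hom_{\cE}(R,R)\to\Hom_{\cE}(I,R)$ is an isomorphism already in $\cE$, and applying $r$ (which, like any functor, preserves isomorphisms) gives $(i,r)$-rigidity. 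There is no circularity, since Theorem~\ref{equiv_cat} precedes Proposition~\ref{solid_is_rigid} and does not use it. Your version buys a slightly stronger and more transparent statement: the rigidity map is invertible at the level of $\cE$ itself (in the spectral language of Section~\ref{sol_rig_spec}, solid implies \emph{stable} rigid), from which $(i,r)$-rigidity for every enrichment follows by one application of $r$; the paper's version instead establishes the $\cV$-level isomorphism directly through Proposition~\ref{smashing}. One small caution: your parenthetical ``explicit inverse'' via the adjoint of $\mu$ needs the internal-hom analogue of the uniqueness argument in Proposition~\ref{equiv_solid} ($f\circ\eta=\eta\Rightarrow f=1$), so if you keep it, spell out that identification; the main argument through Theorem~\ref{equiv_cat} does not need it.
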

\begin{proof}
By Theorem~\ref{char_solid} the functor $LX=X\otimes R$ is a smashing
localization functor in $\cE$. Now, Proposition~\ref{smashing} implies that
$L$ is an $(i,r)$-localization functor. Hence, for every $L$-local $Z$ we
have an isomorphism
$$
\eta_I^*\colon \Hom(LI, Z)\longrightarrow \Hom(I, Z).
$$
In particular, taking $Z=LI\cong R$ we obtain that $R$ is an $(i,r)$-rigid monoid.
\end{proof}

\begin{propo}
Let $(L,\eta)$ be an $(i,r)$-localization functor in a closed symmetric
monoidal $\cV$-category $\cE$. If $(i(A),\mu,
\eta)$ is a (commutative) monoid in $\cE$ and $Li(A)\cong i(B)$ for some $A$
and $B$ in $\cV$, then $Li(A)$ admits a unique (commutative) monoid structure
such that the localization map $\eta_{i(A)}\colon i(A)\to Li(A)$ is a
morphism of monoids. \label{loc_rings}
\end{propo}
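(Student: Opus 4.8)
The plan is to transport the monoid structure of $i(A)$ to $\bar R := Li(A)$ along the localization map $\eta_{i(A)}\colon i(A)\to\bar R$, using orthogonality of $L$-local objects and $L$-local equivalences. Write $u\colon I\to i(A)$ for the unit of the given monoid, and recall that $\bar R$ is $L$-local and, by hypothesis, $\bar R\cong i(B)$. The decisive first step is to observe that the tensor powers $\eta_{i(A)}^{\otimes n}\colon i(A)^{\otimes n}\to\bar R^{\otimes n}$ are $L$-local equivalences for every $n\ge 1$. Since $i$ is strong monoidal, $i(A)^{\otimes k}\cong i(A^{\otimes k})$, so this follows by induction on $n$ from Lemma~\ref{closure}(ii), applied with $f=\eta_{i(A)}^{\otimes(n-1)}$ (whose source is isomorphic to an object in the image of $i$) and $g=\eta_{i(A)}$ (whose target $\bar R$ is isomorphic to $i(B)$); also $I\otimes\eta_{i(A)}$ and $\eta_{i(A)}\otimes I$ are, up to the isomorphisms $\lambda$ and $\rho$, the map $\eta_{i(A)}$ itself, hence $L$-local equivalences. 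Because $\bar R$ is $L$-local, orthogonality (see~\eqref{orthog}) then gives that
$$
(\eta_{i(A)}^{\otimes n})^*\colon \cE(\bar R^{\otimes n},\bar R)\longrightarrow \cE(i(A)^{\otimes n},\bar R)
$$
is a bijection, and similarly for the maps induced into $\cE(-,\bar R)$ by $I\otimes\eta_{i(A)}$ and $\eta_{i(A)}\otimes I$.

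With this in hand I would define the structure maps as follows. The unit is $\bar u := \eta_{i(A)}\circ u\colon I\to\bar R$. For the multiplication, apply the bijection above with $n=2$: let $\bar\mu\colon\bar R\otimes\bar R\to\bar R$ be the unique morphism with
$$
\bar\mu\circ(\eta_{i(A)}\otimes\eta_{i(A)})=\eta_{i(A)}\circ\mu .
$$
These two identities say precisely that $\eta_{i(A)}$ is compatible with the structure maps, so once $(\bar R,\bar\mu,\bar u)$ is shown to be a monoid it follows automatically that $\eta_{i(A)}$ is a morphism of monoids.

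It then remains to check the monoid axioms, and each one reduces to an equality of two morphisms out of $\bar R^{\otimes 3}$, out of $I\otimes\bar R$, or out of $\bar R\otimes I$; by injectivity of the appropriate map established above, it suffices to verify these equalities after precomposition with $\eta_{i(A)}^{\otimes 3}$, with $I\otimes\eta_{i(A)}$, or with $\eta_{i(A)}\otimes I$, respectively. Precomposed this way, the two sides of the associativity square unwind, via the defining identity for $\bar\mu$ and bifunctoriality of $\otimes$, to $\eta_{i(A)}\circ\mu\circ(\mu\otimes 1)$ and $\eta_{i(A)}\circ\mu\circ(1\otimes\mu)$, which coincide by associativity of $\mu$; likewise the left and right unit axioms reduce, using the unit axioms for $(i(A),\mu,u)$ and naturality of $\lambda$ and $\rho$, to identities that already hold in $\cE$. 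If moreover $\mu$ is commutative, precomposing $\bar\mu\circ\tau$ with $\eta_{i(A)}\otimes\eta_{i(A)}$ and using naturality of the symmetry $\tau$ together with $\mu\circ\tau=\mu$ yields $\bar\mu\circ\tau=\bar\mu$, so $\bar R$ is a commutative monoid. Uniqueness is then immediate: any monoid structure $(\bar\mu',\bar u')$ on $\bar R$ for which $\eta_{i(A)}$ is a morphism of monoids satisfies $\bar u'=\eta_{i(A)}\circ u=\bar u$ and $\bar\mu'\circ(\eta_{i(A)}\otimes\eta_{i(A)})=\eta_{i(A)}\circ\mu$, hence $\bar\mu'=\bar\mu$ by injectivity of $(\eta_{i(A)}^{\otimes 2})^*$.

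I expect the one genuine obstacle to be the first step. Everything afterwards is a formal consequence of orthogonality, but that only applies once we know $\eta_{i(A)}\otimes\eta_{i(A)}$, and its higher tensor powers, is an $L$-local equivalence — and in general the tensor square of an $L$-local equivalence need not be one. This is exactly where the hypotheses are used: $i(A)$ lying in the image of $i$, and $Li(A)$ being isomorphic to some $i(B)$, are precisely what make Lemma~\ref{closure}(ii) applicable. Granting that, the remaining verifications are the routine diagram chases indicated above.
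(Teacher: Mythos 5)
Your proof is correct, and it follows the paper's overall strategy (transport the structure of $i(A)$ to $Li(A)$ along $\eta_{i(A)}$ via a bijection $\cE(Li(A)\otimes Li(A),Li(A))\cong\cE(i(A)\otimes i(A),Li(A))$), but you establish the key bijection by a different mechanism. The paper obtains it through a chain of adjunction isomorphisms passing through $\cV$, using the enriched $(i,r)$-orthogonality twice together with the hypothesis $Li(A)\cong i(B)$ to trade $\Hom_{\cE}(i(A),Li(A))$ for $\Hom_{\cE}(Li(A),Li(A))$; you instead prove directly, by induction using Lemma~\ref{closure}(ii) and strong monoidality of $i$ (so that $i(A)^{\otimes k}\cong i(A^{\otimes k})$ and $Li(A)\cong i(B)$ put you in the lemma's hypotheses), that $\eta_{i(A)}^{\otimes n}$ is an $L$-local equivalence, and then invoke the unenriched orthogonality~\eqref{orthog}, which the paper notes holds for any $(i,r)$-localization. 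You correctly identify this as the one place where the hypotheses ``$i(A)$ in the image of $i$'' and ``$Li(A)\cong i(B)$'' are indispensable, which is exactly where the paper uses them too. Your route is somewhat more elementary (it stays at the level of hom-sets and avoids the adjunction juggling), and it has the advantage of making explicit both the verification of the monoid axioms by precomposition with $\eta_{i(A)}^{\otimes 3}$, $I\otimes\eta_{i(A)}$, $\eta_{i(A)}\otimes I$ (the paper only says this ``follows from the universal property of $L$ using Lemma~\ref{closure}'') and the uniqueness assertion, which the paper's proof leaves implicit.
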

\begin{proof}
The unit map $\overline{\eta}$ of $Li(A)$ is $\eta_{i(A)}\circ \eta$. The
product map $\overline{\mu}$ is defined by using the universal property of
the localization and adjointness. Indeed, we have natural bijections
\begin{multline}\notag
\cE(i(A)\otimes i(A), Li(A))\cong \cE(i(A), \Hom_{\cE}(i(A), Li(A)))\\
\cong\cV(A, r\Hom_{\cE}(i(A), Li(A)))\cong \cV(A, r\Hom_{\cE}(Li(A), Li(A))) \\
\cong \cE(i(A)\otimes i(B), Li(A))\cong \cV(B, r\Hom_{\cE}(i(A), Li(A))) \\
\cong \cE(i(B), \Hom_{\cE}(Li(A), Li(A)))\cong \cE(Li(A)\otimes Li(A),
Li(A)).
\end{multline}
Hence, the product $\mu$ extends to a unique map $\overline{\mu}\colon
Li(A)\otimes Li(A)\to Li(A)$ rendering commutative the diagram
$$
\xymatrix{
i(A)\otimes i(A)\ar[d]_{\eta_{i(A)}\otimes \eta_{i(A)}} \ar[r]^-{\mu} &
i(A)\ar[d]^{\eta_{i(A)}} \\
Li(A)\otimes Li(A) \ar@{.>}[r]_-{\overline{\mu}} & Li(A).
}
$$
The associativity of $\overline{\mu}$ and its compatibility with
$\overline{\eta}$ follows from the commutativity of the diagrams for $\mu$
and $\eta$ and the universal property of $L$ (using Lemma~\ref{closure}).

In the same way one can prove that $Li(A)$ is commutative when $i(A)$ is
commutative.
\end{proof}

\begin{theo}
Let $\cE$ be a closed monoidal $\cV$-category with associated adjunction
$(i,r)$ and let $(L,\eta)$ be an $(i,r)$-localization functor in $\cE$. If
$LI\cong i(B)$ for some $B$ in $\cV$, then $LI$ is an $(i,r)$-rigid monoid in
$\cE$. In fact, all $(i,r)$-rigid monoids appear as $LI$, for some
(i,r)-localization functor $L$ in $\cE$. \label{thm3.9}
\end{theo}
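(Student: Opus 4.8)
The plan is to treat the two assertions of the theorem separately, in both cases by unwinding the relevant definitions. For the first, I would begin by putting a monoid structure on $LI$. Since $i$ is strong monoidal, the unit $I$ of $\cE$ is canonically isomorphic to $i(I_{\cV})$, and as the monoidal unit it carries the obvious commutative monoid structure; moreover $LI\cong L\,i(I_{\cV})$ is isomorphic to $i(B)$ by hypothesis. Hence Proposition~\ref{loc_rings} applies with $A=I_{\cV}$ and equips $LI$ with a (unique) commutative monoid structure whose unit, under the identification $i(I_{\cV})\cong I$, is the localization map $\eta_I\colon I\to LI$. It then remains to check that $\eta_I^{*}\colon\Hom(LI,LI)\to\Hom(I,LI)$ is an isomorphism in $\cV$. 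But $\eta_I=l_I$ is an $L$-local equivalence (we have $Ll_I=l_{LI}$, which is an isomorphism) and $LI$ is an $L$-local object, so this isomorphism is precisely the enriched orthogonality condition~\eqref{enriched_orthog} from Definition~\ref{def_loc_coloc}(i), applied with $f=\eta_I$ and $Z=LI$. Thus $LI$ is an $(i,r)$-rigid monoid.

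For the converse, let $(R,\mu,\eta)$ be an $(i,r)$-rigid monoid. I would take the singleton class of morphisms $\mathcal{L}=\{\eta\colon I\to R\}$ and let $L=L_{\mathcal{L}}$ be the associated $(i,r)$-$\mathcal{L}$-localization functor, which exists by the standing assumption of this section and is in particular an $(i,r)$-localization functor. The goal is to show $LI\cong R$. First, $\eta$ lies in $\mathcal{L}$, so it is tautologically an $\mathcal{L}$-local, hence $L$-local, equivalence. Second, because $\mathcal{L}$ consists of the single morphism $\eta$, an object $Z$ is $\mathcal{L}$-local precisely when $\eta^{*}\colon\Hom(R,Z)\to\Hom(I,Z)$ is an isomorphism in $\cV$; taking $Z=R$ this is literally the rigidity hypothesis, so $R$ is an $L$-local object. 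Now, since $R$ is $L$-local, the universal property of the localization map $l_I\colon I\to LI$ factors $\eta$ uniquely as $I\xrightarrow{\,l_I\,}LI\xrightarrow{\,\psi\,}R$; as $\eta$ and $l_I$ are $L$-local equivalences, so is $\psi$, and an $L$-local equivalence between $L$-local objects is an isomorphism (apply $L$ and use that $l_{LI}$ and $l_R$ are isomorphisms, or use the orthogonality of $L$-local objects and $L$-local equivalences). Hence $R\cong LI$, as claimed.

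I expect that neither step hides a serious difficulty: in the first part the only thing requiring care is matching the monoid unit produced by Proposition~\ref{loc_rings} with the localization map $\eta_I$ under the isomorphism $i(I_{\cV})\cong I$, and in the second part the key observation is simply that rigidity of $R$ is nothing but the locality of $R$ with respect to the singleton class $\{\eta\}$. Once these identifications are made explicit, both halves of the theorem drop out of the enriched orthogonality that is built into the definition of an $(i,r)$-localization functor.
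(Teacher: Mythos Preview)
Your proposal is correct and follows essentially the same route as the paper: both parts use Proposition~\ref{loc_rings} to equip $LI$ with a monoid structure and then invoke the enriched orthogonality of Definition~\ref{def_loc_coloc}(i) for the first assertion, and for the converse both take the $(i,r)$-localization $L_{\eta}$ and verify $L_{\eta}I\cong R$ by checking that $\eta$ is an $\eta$-equivalence and $R$ is $\eta$-local. Your second part is slightly more explicit in constructing the isomorphism $LI\cong R$ via factorization, but this is just an expansion of a step the paper leaves to the reader.
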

\begin{proof}
Since $i(I_{\cV})=I$ and $LI\cong i(B)$ by assumption, we may apply
Proposition~\ref{loc_rings} to infer that $LI$ is again a monoid in $\cE$.
Moreover, $LI$ is $L$-local and $\eta_I\colon I\to LI$ is an $L$-equivalence.
Thus
$$
\Hom(LI, LI)\longrightarrow \Hom(I, LI)
$$
is an isomorphism, and hence $LI$ is an $(i,r)$-rigid monoid.

Conversely, suppose that $(R,\mu,\eta)$ is an $(i,r)$-rigid monoid and let
$\eta\colon I\to R$ be its unit. Then the $(i,r)$-$\eta$-localization (see
Section~\ref{section_f_loc}) satisfies $L_\eta I\cong R$. Indeed, $\eta$ is
an $\eta$-equivalence and $R$ is $\eta$-local, since
$$
\eta^*\colon \Hom(R, R)\longrightarrow \Hom(I, R)
$$
is an isomorphism, because $R$ is an $(i,r)$-rigid monoid.
\end{proof}

\begin{coro}
All $(i,r)$-rigid monoids of the form $i(B)$ for some $B$ in $\cV$ are
commutative.
\end{coro}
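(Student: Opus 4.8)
The plan is to recognise any $(i,r)$-rigid monoid of the form $i(B)$ as a localization of the unit, and then to invoke the commutative version of Proposition~\ref{loc_rings}. Let $(R,\mu,\eta)$ be an $(i,r)$-rigid monoid with $R\cong i(B)$ for some $B$ in $\cV$. First I would apply the converse part of Theorem~\ref{thm3.9}: the $(i,r)$-$\eta$-localization functor $L_\eta$ satisfies $L_\eta I\cong R$, and this isomorphism is the unique factorization of $\eta\colon I\to R$ through the localization map $\eta_I\colon I\to L_\eta I$, so it carries $\eta_I$ to $\eta$. In particular $L_\eta I\cong i(B)$.

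Next, since $i(I_{\cV})=I$ equipped with its canonical structure maps $(\lambda,\id_I)$ is a commutative monoid and $L_\eta i(I_{\cV})=L_\eta I\cong i(B)$, Proposition~\ref{loc_rings} applied with $A=I_{\cV}$ yields a \emph{unique} commutative monoid structure $(\overline\mu,\overline\eta)$ on $L_\eta I$ for which $\eta_I$ is a morphism of monoids; by the construction in that proof $\overline\mu$ is the unique map with $\overline\mu\circ(\eta_I\otimes\eta_I)=\eta_I\circ\lambda$. Now the unit $\eta\colon I\to R$ of the monoid $R$ is itself a morphism of monoids $(I,\lambda,\id_I)\to(R,\mu,\eta)$ — this is immediate from the unit axiom for $R$ together with the coherence identity $\lambda=\rho$ on $I\otimes I$ — so $\mu\circ(\eta\otimes\eta)=\eta\circ\lambda$. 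Transporting $\mu$ along the isomorphism $R\cong L_\eta I$ of the previous paragraph, and using that this isomorphism takes $\eta$ to $\eta_I$, one obtains a map $L_\eta I\otimes L_\eta I\to L_\eta I$ satisfying the very identity that characterises $\overline\mu$; hence $\mu$ corresponds to $\overline\mu$ under the isomorphism. Since $\overline\mu$ is commutative, conjugating back shows that $\mu$ is commutative, which is the assertion.

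The only delicate point is the bookkeeping in the second step: one must check that the isomorphism $R\cong L_\eta I$ furnished by Theorem~\ref{thm3.9} is compatible with the units, and that the unit $\eta$ is a monoid morphism out of $(I,\lambda,\id_I)$, since together these are exactly what make the uniqueness clause of Proposition~\ref{loc_rings} applicable to the transported multiplication. Everything else is formal, and the corollary follows at once.
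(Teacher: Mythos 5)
Your proof is correct and follows essentially the same route as the paper: identify the rigid monoid $i(B)$ with $L_\eta I$ via Theorem~\ref{thm3.9} and then invoke the commutative case of Proposition~\ref{loc_rings} with $A=I_{\cV}$. Your extra bookkeeping --- checking that the isomorphism $R\cong L_\eta I$ is unit-compatible and using the uniqueness clause to identify the given multiplication $\mu$ with the constructed commutative $\overline{\mu}$ --- is a detail the paper leaves implicit, and it is exactly what upgrades ``$i(B)$ admits a commutative structure'' to ``the given monoid structure is commutative.''
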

\begin{proof}
If $i(B)$ is $(i,r)$-rigid, then we know by Theorem~\ref{thm3.9} that
$i(B)=LI$ for some $(i,r)$-localization functor $L$. But now,
Proposition~\ref{loc_rings} implies that $LI$ is a commutative monoid since
$I=i(I_{\cV})$.
\end{proof}
\section{Solid ring spectra and rigid ring spectra}
\label{sol_rig_spec} We will apply now the results of the previous section to
the stable homotopy category of spectra $\mathcal{S}{\rm p}$; see~\cite{Ada74,
HPS97}. This is a triangulated category equipped with a compatible closed
symmetric monoidal structure. We denote by $\wedge$ the smash product, by $S$
the sphere spectrum, by $\Sigma$ the suspension operator, and by $F(-,-)$ the
internal function spectrum. We write $[X,Y]$ for the abelian group of
morphisms between two spectra $X$ and $Y$ in $\mathcal{S}{\rm p}$ and we say that a
spectrum $X$ is \emph{connective} if $\pi_k(X)=[\Sigma^k S, X]=0$ for $k<0$.
We denote the full subcategory of connective spectra by ${\rm
conn}(\mathcal{S}{\rm p})$. There is a \emph{connective cover functor} $(-)^c$ that
assigns to every spectrum $X$ a connective spectrum $X^c$ and a natural map
$$
c_X\colon X^c\longrightarrow X
$$
such that $\pi_k(c_X)$ is an isomorphism for all $k\ge 0$.

We will be interested in defining localization and colocalization functors in
$\mathcal{S}{\rm p}$ coming from the following two situations (see
Definition~\ref{def_loc_coloc} for notation):
\begin{itemize}
\item[{\rm (I)}] $\cV=\cE=\mathcal{S}{\rm p}$ and $i=r={\rm Id}$. In this case,
    $\Hom(-,-)=F(-,-)$
\item[{\rm (II)}] $\cV={\rm conn}(\mathcal{S}{\rm p})$, $\cE=\mathcal{S}{\rm p}$, $i$
    is the inclusion and $r$ is the connective cover functor. In this
    case, $\Hom(-,-)=F^c(-,-)$ is the connective cover of the function
    spectrum.
\end{itemize}
We will refer to the localizations and colocalizations in (I) as
\emph{stable}. (These were called \emph{closed} in
Section~\ref{loc_coloc_mon}.) This terminology reflects the fact that the
localizations and colocalizations coming from (I) always commute with the
suspension operator ---that is, they are exact or triangulated functors in
$\mathcal{S}{\rm p}$--- while the ones coming from (II) do not necessarily have this
property.

Examples of stable localizations are given by \emph{homological
localizations}; see~\cite{Bou79}. Using the notation of
Section~\ref{section_f_loc}, these correspond to the $E$-localizations
functors. Given a spectrum~$E$, a homological localization functor with
respect to $E$ is a localization functor $L_E$ on $\mathcal{S}{\rm p}$ that turns
$E$\nobreakdash-homology equivalences into homotopy equivalences in a
universal way. Recall that each spectrum $E$ gives rise to a homology theory
defined as $E_k(X)=\pi_k(E\wedge X)$ for every spectrum $X$ and every
$k\in\mathbb{Z}$. A  map of spectra $f\colon X\to Y$ is an
\emph{$E$\nobreakdash-equivalence} if the induced map
$$
f_*\colon E_k(X)\longrightarrow E_k(Y)
$$
is an isomorphism for all $k\in\mathbb{Z}$. A spectrum $Z$ is
\emph{$E$\nobreakdash-local} if each $E$\nobreakdash-equivalence $X\to Y$
induces a homotopy equivalence $[Y,Z]\cong [X,Z]$. Given a homological
localization functor $L_E$, there is an associated stable colocalization
functor $A_E$ constructed by taking the fiber of the localization map. Thus,
for every spectrum $X$ we have an exact triangle
$$
A_E X\longrightarrow X\longrightarrow L_E X\longrightarrow \Sigma A_E X.
$$
The functor $A_E$ is called the \emph{$E_*$-acyclization} functor
in~\cite{Bou79}. Miller's \emph{finite localizations}~\cite{Mill} are
smashing, and therefore homological localizations. Other smashing
localizations include localizations at sets of primes, and homological
localization with respect to the spectrum $K$ of (complex) $K$-theory or the
Johnson--Wilson spectrum $E(n)$ for any $n$.

As examples of localizations and colocalizations of type (II), we have $k$th
\emph{Postnikov sections} $P_{\Sigma^k S}$ and $k$th \emph{connective covers}
$\Cell_{\Sigma^{k+1} S}$. In the notation of Sections~\ref{section_f_loc}
and~\ref{section_coloc} (with $i$ the inclusion and $r$ the connective
cover), they correspond to the functors $P_{\Sigma^kS}=L_{\Sigma^k S\to 0}$
and  $\Cell_{\Sigma^{k+1} S}=C_{\Sigma^{k+1} S}$, respectively. For any spectrum $X$
we have that
$$
\pi_n(\Cell_{\Sigma^{k+1} S} X)=\left\{
\begin{array}{cr}
0 & \mbox{if $n\le k$}, \\
\pi_n(X) & \mbox{if $n> k$},
\end{array}
\right.
\quad
\pi_n(P_{\Sigma^{k}S} X)\cong\left\{
\begin{array}{cr}
0 & \mbox{if $n\ge k$}, \\
\pi_n(X) & \mbox{if $n<k$}.
\end{array}
\right.
$$
Neither $\Cell_{\Sigma^{k+1} S}$ nor $P_{\Sigma^k S}$ commute with suspension.
Note that the connective cover functor $(-)^c$ is precisely $\Cell_{S}$. More
generally, \emph{$f$-localizations}~\cite{CG05} and
\emph{$E$\nobreakdash-cellu\-la\-ri\-zations}~\cite{Gut12} in~$\mathcal{S}{\rm p}$ are also functors
of type (II).

A ring spectrum is \emph{solid} if the multiplication map is a homotopy
equivalence, and a ring spectrum $R$ is called a \emph{rigid ring spectrum}
if it is an $(i,r)$-rigid monoid with~$i$ and $r$ as in (II), that is, if the
connective cover of the evaluation map $F^c(R, R)\to R^c$ is a homotopy
equivalence. A ring spectrum is a \emph{stable rigid ring spectrum} if it is
an $(i,r)$-rigid monoid with $i=r={\rm Id}$ as in (I), that is, if the
evaluation map $F(R,R)\to R$ is a homotopy equivalence.

Given an abelian group $A$, we denote by $HA$ its corresponding
\emph{Eilenberg--Mac Lane spectrum} and by $MA$ its corresponding
\emph{Moore spectrum}. The former is characterized by the property that
$\pi_k(HA)=A$ if $k=0$ and it is zero if $k\ne 0$, and the latter is
characterized by the property that it is connective, $(H\mathbb{Z})_k(MA)=0$
if $k\ne 0$,  and $(H\mathbb{Z})_0(MA)=\pi_0(MA)=A$.

By Proposition~\ref{solid_is_rigid}, every solid ring spectrum is a rigid
ring spectrum and a stable rigid ring spectrum, but the converse does not
hold in general. For instance, the ring spectrum $H\widehat{\mathbb{Z}}_p$,
where $\widehat{\mathbb{Z}}_p$ are the $p$-adic integers is rigid but not
solid (neither stable rigid). If it were solid, then
$H\widehat{\mathbb{Z}}_p\wedge H\widehat{\mathbb{Z}}_p\cong
H\widehat{\mathbb{Z}}_p$ and this would imply that
$\widehat{\mathbb{Z}}_p\otimes \widehat{\mathbb{Z}}_p\cong
\widehat{\mathbb{Z}}_p$. However
$F^c(H\widehat{\mathbb{Z}}_p,H\widehat{\mathbb{Z}}_p)\cong
H\widehat{\mathbb{Z}}_p$, since $[S, H\widehat{\mathbb{Z}}_p]\cong
[H\widehat{\mathbb{Z}}_p, H\widehat{\mathbb{Z}}_p]$ and $[\Sigma^k
H\widehat{\mathbb{Z}}_p, H\widehat{\mathbb{Z}}_p]=0$ for all $k\ge 1$.

Applying Theorems~\ref{char_solid} and~\ref{thm3.9} to the category
$\mathcal{S}{\rm p}$ readily implies

\begin{theo}
Let $L$ be a localization functor in $\mathcal{S}{\rm p}$ and let $S$ be the sphere spectrum. Then we have the
following:
\begin{itemize}
\item[{\rm (i)}] If $L$ is smashing (hence stable and homological), then
    $LS$ is a solid ring spectrum, and all solid ring spectra appear as
    smashing localizations of the sphere spectrum.
\item[{\rm (ii)}] If $L$ is any localization functor and $LS$ is
    connective, then $LS$ is a rigid ring spectrum and all rigid ring
    spectra appear as localizations of the sphere spectrum.
\item[{\rm (iii)}] If $L$ is a stable localization functor, then $LS$ is
    a stable rigid ring spectrum and all stable rigid ring spectra appear
    as stable localizations of the sphere spectrum. \qed
\end{itemize}
\label{char_spec}
\end{theo}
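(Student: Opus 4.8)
The statement is obtained by specializing Theorems~\ref{char_solid} and~\ref{thm3.9} to $\cE=\mathcal{S}{\rm p}$, whose monoidal unit is the sphere spectrum $I=S$.

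For~(i), Theorem~\ref{char_solid} provides a bijection between solid monoids and smashing localization functors under which a smashing $L$ corresponds to the solid monoid $LI=LS$; since a solid monoid in $\mathcal{S}{\rm p}$ is exactly a solid ring spectrum, both halves of~(i) follow. To record the parenthetical remark, note that $L$ is stable, i.e.\ a closed (hence $(\mathrm{Id},\mathrm{Id})$-) localization, by Proposition~\ref{smashing}(i); and $L$ is homological because for spectra a map $f$ satisfies ``$LS\wedge f$ is an isomorphism'' if and only if $\pi_*(LS\wedge f)$ is an isomorphism, i.e.\ $f$ is an $(LS)_*$-homology equivalence, so the $LS$-localization $L=L_{LS}$ of \S\ref{section_f_loc} is the homological localization at $LS$.

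For~(ii) and~(iii), view $\mathcal{S}{\rm p}$ as a $\cV$-category through the adjunction $(i,r)$ of type~(II) and type~(I) respectively. Since $S$ is connective, in case~(II) one has $i(I_\cV)=i(S)=S=I$, so Proposition~\ref{loc_rings} and Theorem~\ref{thm3.9} apply; in case~(I) this is trivial as $i=r=\mathrm{Id}$. If $L$ is a localization functor with $LS$ connective, then $LS\cong i(LS)$ in case~(II) and Theorem~\ref{thm3.9} makes $LS$ an $(i,r)$-rigid monoid, which is by definition a rigid ring spectrum; in case~(I) the hypothesis ``$LS\cong i(B)$'' holds for every spectrum, so any stable localization functor $L$ makes $LS$ a stable rigid ring spectrum. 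In both cases the converse is the second half of Theorem~\ref{thm3.9}: an $(i,r)$-rigid monoid $R$ --- that is, a (stable) rigid ring spectrum --- equals $L_\eta S$ for the $(i,r)$-$\eta$-localization attached to the unit $\eta\colon S\to R$, and hence is a localization of the sphere.

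The proof is essentially bookkeeping once the general theorems are in hand. The points that require a word are the identification in~(i) of the $E$-equivalences of \S\ref{section_f_loc} with $E_*$-homology equivalences in $\mathcal{S}{\rm p}$, and the connectivity of the monoidal unit $S$, which is precisely what lets Theorem~\ref{thm3.9} be invoked in the type-(II) setting of~(ii). One should also note that an arbitrary localization functor on $\mathcal{S}{\rm p}$ is an $(i,r)$-localization for the type-(II) structure --- equivalently, that the $L$-local spectra are closed under the desuspension $\Omega$, which holds since they form a reflective subcategory and $\Omega Z$ is a homotopy limit; this is the one place where the weaker enrichment $\Hom(-,-)=F^c(-,-)$ rather than $F(-,-)$ is used. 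I do not expect any obstacle beyond arranging these observations.
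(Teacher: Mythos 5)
Your proposal is essentially the paper's own argument: Theorem~\ref{char_spec} is stated with a qed symbol precisely because it is the specialization of Theorems~\ref{char_solid} and~\ref{thm3.9} to $\mathcal{S}{\rm p}$ with the type-(I) and type-(II) enrichments, together with Proposition~\ref{smashing} and the identification $L=L_{LS}$ with $(LS)_*$-homological localization in the smashing case, exactly as you carry out. The only caveat is your side remark that an \emph{arbitrary} localization functor is automatically a type-(II) $(i,r)$-localization because local objects ``form a reflective subcategory and $\Omega Z$ is a homotopy limit'': $\Omega Z$ is a homotopy limit, not a categorical limit in the homotopy category, so closure of a reflective subcategory under limits does not give this; but the point is harmless, since the paper's standing convention (the situations (I) and (II) set out before the theorem) already takes the localization functors in question to be $(i,r)$-localizations for the relevant enrichment.
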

More explicitly, if $R$ is a solid ring spectrum, then $R\cong L_R S$; and if
$R$ is a rigid ring spectrum, then $R\cong L_\eta S$, where $\eta\colon S\to
R$ is the unit of the ring spectrum~$R$. If $R$ is a stable rigid ring
spectrum, then $R\cong L_{\Sigma^*\eta}S$, where $\Sigma^*\eta=\{\Sigma^k
\eta\mid k\in\mathbb{Z}\}$

For any solid ring spectrum $R$, the colocalization functor $C_R$ is
precisely stable $R$-cellularization $\Cell_R$, and the localization functor
$L_R$ is homological localization with respect to $R$. If we denote by
$L_R\mathcal{S}{\rm p}$ the full subcategory of $R$-local spectra and by
$\Cell_{R}\mathcal{S}{\rm p}$ the full subcategory of $R$-cellular spectra, then
Theorem~\ref{equiv_cat} gives

\begin{propo}
If $R$ is a solid ring spectrum, e.g., $R=H\mathbb{Q}$, $L_K S$ or
$L_{E(n)}S$, then there is an equivalence of categories
$L_R{\mathcal{S}{\rm p}}\cong R{\rm{\text-mod}}\cong \Cell_R{\mathcal{S}{\rm p}}$.
\qed
 \label{prop_equiv_cat}
\end{propo}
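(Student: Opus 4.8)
The plan is to obtain the statement as a direct application of Theorem~\ref{equiv_cat} to the stable homotopy category, after matching the abstract subcategories appearing there with the concrete ones named here. First I would note that $\mathcal{S}{\rm p}$ is a closed symmetric monoidal category and that, by definition, a solid ring spectrum $R$ is a solid monoid in $\mathcal{S}{\rm p}$. Hence Theorem~\ref{equiv_cat} applies directly and produces an equivalence of categories among the full subcategory $L_R$-loc of $L_R$-local spectra, the category $R$-mod of $R$-modules, and the full subcategory $C_R$-coloc of $C_R$-colocal spectra, where $L_R X = X\wedge R$ (coaugmented by ${\rm Id}\wedge\eta$) and $C_R X = F(R,X)$ (augmented by $F(\eta,{\rm Id})$), as in Remark~\ref{rem_equiv}.

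Next I would identify these three categories with those in the statement, using the observations recorded just before the proposition. Since $L_R$ is smashing, it agrees with the $R$-localization functor of Section~\ref{section_f_loc}, which on $\mathcal{S}{\rm p}$ is precisely homological localization with respect to $R$; therefore $L_R$-loc is the full subcategory $L_R\mathcal{S}{\rm p}$ of $R$-local spectra. Dually, the mapping colocalization $C_R = F(R,-)$ is stable $R$-cellularization $\Cell_R$, so $C_R$-coloc is the full subcategory $\Cell_R\mathcal{S}{\rm p}$ of $R$-cellular spectra. Substituting these two identifications into the chain of equivalences coming from Theorem~\ref{equiv_cat} yields $L_R\mathcal{S}{\rm p}\cong R\text{-mod}\cong\Cell_R\mathcal{S}{\rm p}$, which is the assertion.

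Finally I would check that the three sample spectra are solid. For $R = H\mathbb{Q}$ one has $\pi_0(H\mathbb{Q}\wedge H\mathbb{Q})\cong\mathbb{Q}\otimes_{\mathbb{Z}}\mathbb{Q}\cong\mathbb{Q}$ and $\pi_n(H\mathbb{Q}\wedge H\mathbb{Q}) = 0$ for $n\neq 0$ (this group computes the rational homology of $H\mathbb{Q}$), and the multiplication induces the isomorphism $\mathbb{Q}\otimes_{\mathbb{Z}}\mathbb{Q}\to\mathbb{Q}$ on $\pi_0$, so $\mu$ is a homotopy equivalence; equivalently, $H\mathbb{Q} = M\mathbb{Q}$ is the Moore spectrum of a subring of the rationals. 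The functors $L_K$ and $L_{E(n)}$ are smashing, so $L_K S$ and $L_{E(n)}S$ are solid ring spectra by Theorem~\ref{char_spec}(i). There is essentially nothing to overcome here: the only point requiring a little care is the bookkeeping that matches $L_R$-loc with $L_R\mathcal{S}{\rm p}$ and $C_R$-coloc with $\Cell_R\mathcal{S}{\rm p}$, and that has already been spelled out in the discussion preceding the statement.
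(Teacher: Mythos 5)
Your argument is correct and is essentially the paper's own: the proposition is stated with a \qed precisely because it follows immediately from Theorem~\ref{equiv_cat} applied to $\mathcal{S}{\rm p}$, combined with the identifications (made just before the statement) of $L_R$ with $R$-homological localization and of $C_R=F(R,-)$ with stable $R$-cellularization $\Cell_R$. Your additional check that $H\mathbb{Q}$, $L_K S$ and $L_{E(n)}S$ are solid is harmless extra detail, since these are only offered as examples.
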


Observe that Proposition~\ref{prop_smashing_mor} implies that any of the
categories of Proposition~\ref{prop_equiv_cat} is also equivalent to the
category of $\eta$-local objects, where $\eta\colon S\to R$ is the unit
of~$R$. This has the following consequence:
\begin{coro}
If $(R,\mu,\eta)$ is a solid ring spectrum, then the $R$-homological
localization functor $L_R$ is equivalent to the stable localization $L_\eta$.
\label{solid_map}
\end{coro}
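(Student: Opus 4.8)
The plan is to read this off directly from Proposition~\ref{prop_smashing_mor}. First I would note that, since $R$ is a solid ring spectrum, it is in particular a solid monoid in $\mathcal{S}{\rm p}$, so by Theorem~\ref{char_solid} (equivalently, Remark~\ref{rem_equiv}) the functor $L_R$ defined by $L_RX = X\wedge R$ with coaugmentation $l = \mathrm{Id}\wedge\eta$ is a localization functor, which is smashing by construction and satisfies $L_RS\cong R$. This is exactly the $R$-homological localization functor appearing in the statement.

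Then I would apply Proposition~\ref{prop_smashing_mor} to $L = L_R$ in $\cE = \mathcal{S}{\rm p}$: it produces an equivalence between $L_R$ and the closed localization functor $L_{l_S}$ with respect to the single morphism $l_S\colon S\to L_RS$, the localization of the sphere spectrum. The one small bookkeeping point is to recognize $l_S$ as the unit $\eta$: by construction $l_S$ is the composite of the left unit isomorphism $S\cong S\wedge S$, the map $\mathrm{Id}\wedge\eta\colon S\wedge S\to S\wedge R$, and the left unit isomorphism $S\wedge R\cong R$, and naturality of the unit isomorphism identifies this composite with $\eta\colon S\to R$. Since in $\mathcal{S}{\rm p}$ the closed localizations are precisely the stable ones (as recalled at the beginning of Section~\ref{sol_rig_spec}), $L_{l_S}$ is the stable localization $L_\eta$, and chaining the two equivalences gives $L_R\simeq L_\eta$.

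I do not expect any genuine obstacle here: the argument is essentially an unwinding of definitions, the only points needing a little care being the identification of the localization-of-the-unit map $l_S$ with $\eta$ and the remark that ``closed'' and ``stable'' coincide for $\mathcal{S}{\rm p}$, so that the functor produced by Proposition~\ref{prop_smashing_mor} really is the stable $L_\eta$ named in the statement. As a consistency check, this also recovers the observation preceding the corollary, namely that the $L_R$-local spectra---which by Theorem~\ref{equiv_cat} form a category equivalent to $R$-mod---are precisely the $\eta$-local spectra.
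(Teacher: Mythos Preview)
Your argument is correct and is essentially the one the paper intends: the text immediately preceding the corollary invokes Proposition~\ref{prop_smashing_mor} to identify the $L_R$-local objects with the $\eta$-local ones, and your write-up simply makes explicit the two bookkeeping points (that $l_S$ is $\eta$ up to the unit isomorphism, and that closed equals stable in $\mathcal{S}{\rm p}$) which the paper leaves implicit.
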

\begin{rem}
Proposition~\ref{prop_equiv_cat} is a particular instance of a more general
result in the context of stable model categories~\cite[Theorem 2.7]{GS} and,
in fact, the above equivalence is induced by a Quillen equivalence at 
the level of the corresponding model categories.
Also, as proved in \cite[Corollary 4.14]{BR12}, Corollary~\ref{solid_map}
also holds at the level of the localized model structures, that is, the left
Bousfield localizations $L_R$ and $L_{\eta}$ coincide.
\end{rem}

The following result (see \cite[Theorems 5.12 and 5.14]{CG05}) relates
$f$-localizations of the integral Eilenberg-Mac\,Lane spectrum $H\mathbb{Z}$
with algebraic rigid rings (that is, rigid monoids in the category of abelian
groups).

\begin{theo}
Let $L_f$ be any $f$-localization functor in $\mathcal{S}{\rm p}$. Then, $L_f
H\mathbb{Z}\cong HA$ for some rigid ring $A$ and all (algebraic) rigid rings
appear this way. If $L_f$ is smashing, then $A$ is a subring of the
rationals. \qed
\end{theo}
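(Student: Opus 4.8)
The plan is to transplant the proof of Theorem~\ref{thm3.9} into $\mathcal{S}{\rm p}$ equipped with the structure~(II), playing $H\mathbb{Z}$ off against the sphere; the only genuinely homotopy-theoretic ingredient is the assertion that $L_fH\mathbb{Z}$ is again an Eilenberg--Mac\,Lane spectrum, which is where \cite{CG05} does the work.

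I would begin with the easy half. Fix an $f$-localization $L_f$; it is an $(i,r)$-localization for the structure~(II). Since $H\mathbb{Z}$ is connective we have $H\mathbb{Z}=i(H\mathbb{Z})$, so, assuming for the moment that $L_fH\mathbb{Z}$ is connective (see below), Proposition~\ref{loc_rings} endows $L_fH\mathbb{Z}$ with a unique commutative ring spectrum structure for which the localization map $l\colon H\mathbb{Z}\to L_fH\mathbb{Z}$ is a morphism of ring spectra, its unit being $l$ composed with the unit $S\to H\mathbb{Z}$ of $H\mathbb{Z}$. Suppose further --- this is the content of the hard step --- that $L_fH\mathbb{Z}\cong HA$, where $A=\pi_0L_fH\mathbb{Z}$. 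Since $HA$ is $L_f$-local and $l$ is an $L_f$-local equivalence, the enriched orthogonality built into Definition~\ref{def_loc_coloc} gives that
$$
l^*\colon F^c(HA,HA)\longrightarrow F^c(H\mathbb{Z},HA)
$$
is an equivalence. A routine computation with the universal coefficient sequence, using that $HA$ and $H\mathbb{Z}$ are connective, identifies $F^c(HA,HA)$ with $H\Hom_{\mathbb{Z}}(A,A)$ and $F^c(H\mathbb{Z},HA)$ with $HA$, and turns the composite of $l^*$ with the equivalence $F^c(H\mathbb{Z},HA)\to F^c(S,HA)=HA$ induced by $S\to H\mathbb{Z}$ --- which is the evaluation at the unit of the ring spectrum $HA$ --- into $H$ of the evaluation-at-the-unit homomorphism $\Hom_{\mathbb{Z}}(A,A)\to A$. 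Hence that homomorphism is an isomorphism, i.e.\ $A$ is an algebraic rigid ring. So the whole first assertion rests on showing that $L_fH\mathbb{Z}$ is an Eilenberg--Mac\,Lane spectrum.

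That step is the real obstacle, carried out in \cite[Theorem 5.12]{CG05}. The approach I would take is to note first that $L_f$ restricts to a localization functor on $H\mathbb{Z}$-modules: for an $H\mathbb{Z}$-module $M$, the map $H\mathbb{Z}\wedge l_M$ is still an $f$-local equivalence by Lemma~\ref{closure}(i) (as $H\mathbb{Z}$ is connective), and from this one produces a canonical $H\mathbb{Z}$-module structure on $L_fM$ with $l_M$ a module map. Transporting along $H\mathbb{Z}\text{-mod}\simeq D(\mathbb{Z})$, it then suffices to show that the induced localization carries the (homotopy) discrete modules into discrete modules --- equivalently, that $H\mathbb{Z}\to H(\pi_0L_fH\mathbb{Z})$ is already the $f$-localization and in particular $L_fH\mathbb{Z}$ is connective. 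This is precisely the point where inverting a single map $f$ is strictly stronger than being an abstract localization functor; I expect it to be the only step demanding real effort, everything else being a matter of unwinding adjunctions.

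Finally, the converse and the smashing case. Given an algebraic rigid ring $A$ with unit $\varphi\colon\mathbb{Z}\to A$, take $f$ to be the morphism of spectra $H\varphi\colon H\mathbb{Z}\to HA$. Then $H\varphi$ is tautologically an $f$-local equivalence, and $HA$ is $f$-local because the induced map $F^c(HA,HA)\to F^c(H\mathbb{Z},HA)$ is, by the computation above, $H$ of the evaluation-at-the-unit isomorphism $\Hom_{\mathbb{Z}}(A,A)\to A$; hence $L_fH\mathbb{Z}\cong HA$, so every algebraic rigid ring occurs. If in addition $L_f$ is smashing, then $L_fX\cong X\wedge L_fS$ for all $X$, whence $HA\cong L_fH\mathbb{Z}\cong H\mathbb{Z}\wedge L_fS$; reading homotopy groups, $H_*(L_fS;\mathbb{Z})$ is concentrated in degree $0$ with value $A$, so $L_fS$ is connective and is a Moore spectrum $MA$. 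By Theorem~\ref{char_spec}(i) this $MA$ is a solid ring spectrum, so $MA\wedge MA\cong MA$; taking homotopy yields $A\otimes_{\mathbb{Z}}A\cong A$ together with $\operatorname{Tor}_1^{\mathbb{Z}}(A,A)=0$, and the latter forces $A$ to be torsion-free. Thus $A$ is a torsion-free solid ring, and by the classification of solid rings~\cite{BK72} recalled in the introduction $A$ is a subring of the rationals.
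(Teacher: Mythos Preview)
The paper does not prove this theorem at all: it is quoted verbatim from \cite[Theorems~5.12 and~5.14]{CG05} and closed with a bare \qed. There is therefore nothing in the paper to compare your argument against; your proposal already contains far more than the paper offers, and it correctly isolates the one genuinely hard step --- that $L_fH\mathbb{Z}$ is an Eilenberg--Mac\,Lane spectrum --- as the place where \cite{CG05} does the work. The surrounding formal arguments (endowing $L_fH\mathbb{Z}$ with a ring structure via Proposition~\ref{loc_rings}, deducing rigidity of $A$ from enriched orthogonality, and realising a given rigid $A$ by localizing at $H\varphi$) are all correct; in particular the identification $[HA,HB]\cong\Hom_{\mathbb{Z}}(A,B)$ you call ``routine'' really is routine once one writes $HA$ as the cofibre of $HF_1\to HF_0$ for a free resolution $0\to F_1\to F_0\to A\to 0$.

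One genuine gap: in the smashing case you infer from $H_*(L_fS;\mathbb{Z})$ being concentrated in degree~$0$ that $L_fS$ is connective, hence a Moore spectrum. That implication fails without a bounded-below hypothesis (there exist $H\mathbb{Z}$-acyclic spectra that are nonzero), and nothing you have said supplies one. The fix is to avoid $L_fS$ altogether and work inside $H\mathbb{Z}$-modules: since $L_fS$ is solid one has
\[
HA\wedge_{H\mathbb{Z}}HA\;\cong\;(H\mathbb{Z}\wedge L_fS)\wedge_{H\mathbb{Z}}(H\mathbb{Z}\wedge L_fS)\;\cong\;H\mathbb{Z}\wedge L_fS\wedge L_fS\;\cong\;H\mathbb{Z}\wedge L_fS\;=\;HA,
\]
so $A\otimes_{\mathbb{Z}}^{L}A\simeq A$ in $D(\mathbb{Z})$, giving $A\otimes_{\mathbb{Z}}A\cong A$ and $\mathrm{Tor}_1^{\mathbb{Z}}(A,A)=0$ directly. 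Your concluding appeal to the classification of solid rings then goes through unchanged.
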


Moreover,  the only connective solid ring spectra are Moore spectra of
subrings of the rationals.

\begin{theo}
Let $L$ be any localization functor in $\mathcal{S}{\rm p}$ and assume that $LS$ is
connective. Then $LS$ is a solid ring spectrum if and only if $LS\cong MA$,
where $A$ is a subring of the rationals. \label{thm_new}
\end{theo}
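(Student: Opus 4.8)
The plan is to prove both implications. The easy direction is that a Moore spectrum $MA$ with $A$ a subring of $\mathbb{Q}$ is a connective solid ring spectrum: $MA$ is connective by definition, it carries a ring structure because $A$ is a subring of $\mathbb{Q}$ (so $MA$ is $M\mathbb{Z}$-local and the multiplication $\mathbb{Z}\to A$ extends), and one checks $MA\wedge MA\simeq MA$ by computing $H\mathbb{Z}_*$-homology: $(H\mathbb{Z})_*(MA\wedge MA)$ is concentrated in degree $0$ and equals $A\otimes_{\mathbb{Z}}A\cong A$ since $A$ is flat over $\mathbb{Z}$ and idempotent as a ring (being a localization of $\mathbb{Z}$). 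Hence the multiplication map is an $H\mathbb{Z}$-equivalence between $H\mathbb{Z}$-local spectra, so it is an equivalence, and $MA$ is solid. By Theorem~\ref{char_spec}(i) any such $MA$ is realized as $L_{MA}S$, so it occurs as the $LS$ of a (smashing) localization functor.

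For the converse, suppose $L$ is a localization functor with $LS$ connective and solid. By Theorem~\ref{char_spec} and the remarks following it, $LS\cong L_RS$ where $R=LS$, and $L_R$ is a smashing (hence homological) localization. First I would pin down $\pi_0$: the unit $\eta\colon S\to R$ induces a ring map $\mathbb{Z}=\pi_0(S)\to\pi_0(R)$, and since $\mu\colon R\wedge R\to R$ is an equivalence the ring $\pi_0(R)$ is solid in the sense of Bousfield--Kan (the multiplication on $\pi_0$ is iso, equivalently $\mathbb{Z}\to\pi_0 R$ is a ring epimorphism). Next I would show $R$ is torsion-free, so that by the Bousfield--Kan classification~\cite[Proposition~3.1]{BK72} the only remaining possibility is that $\pi_0(R)=A$ is a subring of the rationals: if $\pi_0(R)$ had torsion it would contain a factor $\mathbb{Z}/n$, and one derives a contradiction with solidity of the spectrum by smashing with a suitable Moore spectrum or by examining the acyclization triangle $A_RX\to X\to L_RX$ together with the fact that $R$ is an $S[J^{-1}]$-type object for $J$ a set of primes; more directly, $R$ solid forces $R\wedge R\simeq R$, and if there were $n$-torsion then $R/n\wedge R/n$ would not agree with $R/n$ in the expected range. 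Finally, knowing $\pi_0(R)=A\subseteq\mathbb{Q}$, I would show $R$ has no higher homotopy: $R$ is connective with $\pi_0(R)=A$, and since $L_R$ is smashing we have $R\wedge R\simeq R$; comparing with $R\wedge MA$ (which is $R$ since $MA$ is built from $S$ by $A$-local constructions and $R$ is $A$-local) and using that $MA\to R$ is an $H\mathbb{Z}$-equivalence on $\pi_0$, an induction on Postnikov sections shows $\pi_k(R)=0$ for $k>0$, giving $R\simeq MA$.

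The main obstacle is the step ruling out higher homotopy groups, i.e. showing a connective solid ring spectrum with $\pi_0=A\subseteq\mathbb{Q}$ must be $MA$ rather than some larger connective $A$-algebra. The key leverage is that solidity, $R\wedge R\simeq R$, is an extremely strong smallness condition: one computes $H\mathbb{Z}_*(R)$ and compares with $H\mathbb{Z}_*(R\wedge R)=H\mathbb{Z}_*(R)\otimes_A H\mathbb{Z}_*(R)$-type expressions (using that $R$ is an $MA$-module after the $\pi_0$ identification), and a K\"unneth/flatness argument forces $H\mathbb{Z}_*(R)$ to be concentrated in degree $0$, hence $R\simeq MA$ by the Hurewicz theorem for connective spectra. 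I expect the bookkeeping with the $(H\mathbb{Z})_*$-homology K\"unneth spectral sequence, rather than any conceptual difficulty, to be where the real work lies.
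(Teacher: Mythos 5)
Your easy direction is essentially the paper's (the paper simply observes that for $A\subseteq\mathbb{Q}$ the multiplication $MA\wedge MA\to MA$ is an equivalence; your Hurewicz/$H\mathbb{Z}$-local argument is a reasonable way to justify it). For the hard direction, however, you take a genuinely different route. The paper's proof is very short: by Theorem~\ref{char_spec}(i) a connective solid $LS$ is $L'S$ for the smashing localization $L'=L_{LS}$, and then it quotes \cite[Theorem 5.14]{CG05}, which already asserts that for a smashing localization $(H\mathbb{Z})_k(L'S)=0$ for $k\neq 0$ and $(H\mathbb{Z})_0(L'S)$ is a subring of the rationals; connectivity then identifies $LS$ with a Moore spectrum. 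You instead propose a self-contained argument (solid $\pi_0$ via $\pi_0(R\wedge R)\cong\pi_0R\otimes\pi_0R$, the Bousfield--Kan classification \cite{BK72}, exclusion of torsion, then a K\"unneth argument killing higher $H\mathbb{Z}$-homology), which in effect reproves the relevant special case of the cited theorem. That is a legitimate and more elementary alternative, and your instinct to run the K\"unneth step over $MA$ (equivalently over $HA$, where $A$ is a PID and $R\wedge R\simeq R\wedge_{MA}R$) is exactly what is needed, since there is no K\"unneth formula for $H\mathbb{Z}_*$ of a smash product over the sphere.

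Two steps of your plan are not yet arguments, and one of them is the crux. First, the torsion exclusion: since $\mathbb{Z}/n$ and $\mathbb{Z}[J^{-1}]\times\mathbb{Z}/n$ are perfectly good algebraic solid rings, nothing about $\pi_0$ alone rules them out, and your stated reason (``$R/n\wedge R/n$ would not agree with $R/n$ in the expected range'') is a hope, not a proof; note $R/n$ need not inherit any ring structure. A correct argument along your lines uses the low-degree exact sequence for connective spectra, in which $\pi_1(R\wedge R)$ (or $H\mathbb{Z}_1(R\wedge R)$) surjects onto $\operatorname{Tor}(\pi_0R,\pi_0R)$ with kernel generated by the images of $\pi_1R\otimes\pi_0R$ and $\pi_0R\otimes\pi_1R$: because $\mu$ is an equivalence, both unit maps $\pi_1R\to\pi_1(R\wedge R)$ are inverse to $\mu_*$, hence all of $\pi_1(R\wedge R)$ lies in that kernel, forcing $\operatorname{Tor}(\pi_0R,\pi_0R)=0$ and $\pi_0R$ torsion-free, whence $\pi_0R\subseteq\mathbb{Q}$ by \cite{BK72}. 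Second, the main step killing higher homology is only announced; it does work, but you should spell it out: with $B_*=H\mathbb{Z}_*(R)$ viewed as $HA$-module homotopy and $H\mathbb{Z}\wedge(R\wedge R)\simeq(H\mathbb{Z}\wedge R)\wedge_{HA}(H\mathbb{Z}\wedge R)$, the K\"unneth sequence over the PID $A$ in the lowest degree $n>0$ with $B_n\neq 0$ gives $H\mathbb{Z}_n(R\wedge R)\cong B_n\oplus B_n$ with each summand mapped isomorphically to $B_n$ by $\mu_*$, contradicting that $\mu_*$ is an isomorphism; then $B_*$ is concentrated in degree $0$ with value $A$, and connectivity gives $R\simeq MA$. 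As written, the proposal is a sound plan whose two key verifications are missing or misstated; filling them in as above (or simply invoking \cite[Theorem 5.14]{CG05} as the paper does) completes the proof.
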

\begin{proof}
Suppose that $LS$ is a solid ring. Then, by Theorem~\ref{char_spec}(i), we
have that $LS\cong L'S$, where $L'$ is a smashing localization functor (in fact, $L'=L_{LS}$). 
Now, it follows from \cite[Theorem 5.14]{CG05} that
$(H\mathbb{Z})_k(L'S)=0$ if $k\ne 0$ and $(H\mathbb{Z})_0(L'S)\cong
R$ a subring of the rationals. Thus, $LS\cong MA$, since it is connective.

The converse holds since if $R$ is a subring of the rationals, then the
multiplication map $MA\wedge MA\to MA$ is an isomorphism.
\end{proof}

\begin{coro}
If $R$ is a connective solid ring spectrum, then $R\cong MA$ for some subring
of the rationals $A$. \qed
\end{coro}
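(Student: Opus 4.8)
The plan is to obtain this as an immediate consequence of Theorem~\ref{thm_new}; the only work is to realize the given solid ring spectrum $R$ as $LS$ for a suitable localization functor $L$. The first step is to invoke Theorem~\ref{char_spec}(i) (equivalently, Theorem~\ref{char_solid} together with Remark~\ref{rem_equiv} and Proposition~\ref{equiv_solid}): a solid ring spectrum $R$ is precisely $L_R S$, where $L_R$ is the homological localization at $R$, i.e.\ the smashing localization functor $X\mapsto X\wedge R$ coaugmented by $\mathrm{Id}\wedge\eta$. This $L_R$ is genuinely a localization functor on $\mathcal{S}{\rm p}$, since $X\mapsto X\wedge R$ is idempotent exactly because $\mu$ is a homotopy equivalence, so there is no existence issue to worry about.

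The second step is to observe that, since $R$ is connective by hypothesis and $R\cong L_R S$, the spectrum $L_R S$ is connective. Hence Theorem~\ref{thm_new} applies verbatim with $L=L_R$, and it yields $L_R S\cong MA$ for some subring $A$ of the rationals. Chaining this with the identification $R\cong L_R S$ gives $R\cong MA$, which is the assertion.

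I do not expect a real obstacle: all the substance is already contained in the earlier results, and the argument is a short bookkeeping of definitions. The one point that deserves a line of care is the identification $R\cong L_R S$ --- this is where solidity is actually used --- and the fact that connectivity survives it, which is automatic because it is an equivalence of spectra. If one preferred a proof that does not route through Theorem~\ref{thm_new}, one could instead compute $(H\mathbb{Z})_*(R)$ directly from the equivalence $R\wedge R\simeq R$ and the connectivity of $R$ (concluding that $(H\mathbb{Z})_k R=0$ for $k\neq 0$ and $(H\mathbb{Z})_0 R$ is a subring of $\mathbb{Q}$, whence $R\cong MA$); but that essentially reproves \cite[Theorem 5.14]{CG05}, so deducing the corollary from Theorem~\ref{thm_new} is the economical route.
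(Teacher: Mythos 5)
Your proposal is correct and follows essentially the same route the paper intends: the corollary is stated as an immediate consequence of Theorem~\ref{thm_new}, obtained by writing $R\cong L_R S$ via Theorem~\ref{char_spec}(i) (equivalently Remark~\ref{rem_equiv}) and noting that $L_R S$ is then connective. Your extra remarks on idempotence of $X\mapsto X\wedge R$ and the optional direct computation are consistent with the paper but not needed.
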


\end{document}